\newtheorem{thm}{Theorem}
\newtheorem{prop}{Proposition}
\newtheorem{lem}{Lemma}
\newtheorem{cor}{Corollary}
\theoremstyle{remark}
\newtheorem{rem}{Remark}
\theoremstyle{definition}
\newcommand{\G}{\Gamma}
\newcommand{\C}{\mathbb{ C}}
\newcommand{\Z}{\mathbb{ Z}}
\DeclareMathOperator{\DEF}{def}
\newcommand{\ltb}{\beta_1}
\begin{document}

\title{The deficiencies of K\"ahler groups}
\author{D.~Kotschick}
\address{Mathematisches Institut, {\smaller LMU} M\"unchen,
Theresienstr.~39, 80333~M\"unchen, Germany}
\email{dieter@member.ams.org}
\date{May 17, 2012; \copyright{\ D.~Kotschick 2012}}
\thanks{Work done during a visit to the Institut Mittag-Leffler (Djursholm, Sweden). I am grateful to J.~Amor\'os,
G.~Kokarev and P.~Kropholler for useful comments.}
\subjclass[2000]{primary 20F05, 32Q15, 57M05; secondary 14F35, 20J05}

\begin{abstract}
Generalizing the theorem of Green--Lazarsfeld and Gromov, we classify K\"ahler groups of deficiency at least two.
As a consequence we see that there are no K\"ahler groups of even and strictly positive deficiency. With the same 
arguments we prove that K\"ahler groups that are non-Abelian and are limit groups in the sense of Sela
are surface groups.
\end{abstract}

\maketitle


\hfill {\small {\it It is only fair to say that our knowledge}}

\hfill {\small {\it  of deficiency is quite deficient.}\footnote{F.~R.~Beyl and J.~Tappe~\cite[p.~191]{BT}}

\bigskip

\section{Introduction}

The deficiency $\DEF (\Gamma)$ of a finitely presentable group $\Gamma$ is the maximum over all presentations 
of the difference of the number of generators and the number of relators. This invariant, which arises naturally in 
combinatorial group theory, is often difficult to compute. 
Its investigation has, almost from the beginning, had close connections to low-dimensional topology. For example, 
it is well-known that knot groups have deficiency one, and that the fundamental groups of closed three-manifolds 
have non-negative deficiency; cf.~Epstein~\cite{Epstein}. 

In K\"ahler geometry, the deficiency has also long been known to play a r\^ole because of the following:
\begin{thm}{\rm (Green--Lazarsfeld~\cite{GL}, Gromov~\cite{G})}\label{t:fewrel}
If the fundamental group of a compact K\"ahler manifold $X$ has deficiency $\geq 2$, then the Albanese image of $X$
is a curve.
\end{thm}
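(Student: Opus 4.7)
The plan is to turn $\DEF(\Gamma)\ge 2$ into an upper bound on the rank of the cup product on $H^1(X;\C)$ and then to use K\"ahler Hodge theory and the Castelnuovo--de~Franchis theorem to extract a surjective holomorphic map from $X$ onto a curve of genus $\ge 2$.

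First I would establish the cohomological input. Given any presentation of $\Gamma=\pi_1(X)$ with $g$ generators and $r$ relators, the presentation $2$-complex $K$ has $\chi(K)=1-g+r$, while the inclusion of $K$ as a skeleton of $B\Gamma$ (attaching only cells of dimension $\ge 3$) gives $H^1(K;\C)=H^1(\Gamma;\C)$ and an injection $H^2(\Gamma;\C)\hookrightarrow H^2(K;\C)$. Consequently $b_1(\Gamma)-b_2(\Gamma)\ge g-r$, and maximising over presentations yields $b_2(\Gamma)\le b_1(X)-2$. Naturality of cup product under the classifying map $X\to B\Gamma$ then places the image of $\cup:\Lambda^2 H^1(X;\C)\to H^2(X;\C)$ inside the pullback of $H^2(\Gamma;\C)$, so $\rk\cup\le b_2(\Gamma)\le b_1(X)-2$.

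For the geometric step I would decompose $H^1(X;\C)=H^{1,0}\oplus H^{0,1}$ with $n=\dim_{\C}H^{1,0}=b_1(X)/2$. Since $\cup$ respects Hodge type, it splits into $\Lambda^2 H^{1,0}\to H^{2,0}$ (wedge of holomorphic $1$-forms), its conjugate, and the Hermitian pairing $H^{1,0}\otimes H^{0,1}\to H^{1,1}$, $\omega\otimes\bar\eta\mapsto\omega\wedge\bar\eta$. K\"ahler positivity makes $\omega\wedge\bar\omega$ a nonzero class in $H^{1,1}$ for every nonzero $\omega\in H^{1,0}$, which forces the mixed pairing to have nearly full rank. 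A dimension count against the upper bound from the previous paragraph then produces a $2$-dimensional isotropic subspace $V\subset H^{1,0}$, on which $\Lambda^2 V\to H^{2,0}$ vanishes. Since a holomorphic $2$-form on a compact K\"ahler manifold is zero as a form iff its cohomology class is zero, Castelnuovo--de~Franchis yields a surjective holomorphic map $f:X\to C$ onto a smooth projective curve of genus $\ge 2$ with $V\subseteq f^*H^0(C,\Omega^1_C)$. The Albanese of $X$ then factors as $X\xrightarrow{f}C\hookrightarrow\mathrm{Jac}(C)\to\mathrm{Alb}(X)$, and its image coincides (up to translation) with the image of $C$, which is a curve.

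The main obstacle is the dimension count producing the isotropic subspace $V$. One must show both that the kernel of $\cup$ cannot be entirely absorbed by the $(1,1)$-part (where K\"ahler positivity provides a lower bound on the rank of the mixed pairing) and that the surviving kernel inside $\Lambda^2 H^{1,0}$ actually contains a \emph{decomposable} element, not merely higher-rank $2$-vectors. This is the substantive analytic content of the Green--Lazarsfeld--Gromov argument; it can alternatively be extracted from the generic-vanishing theorems on $\mathrm{Pic}^0(X)$.
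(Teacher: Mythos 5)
There are two genuine gaps here. The first is the one you flag yourself as the ``main obstacle'': producing the isotropic subspace. Your proposed dimension count does not close. The bound $\rk\cup\le b_1-2$ restricted to $\Lambda^2H^{1,0}\to H^{2,0}$ only forces a nonzero kernel when $\binom{n}{2}>2n-2$ (i.e.\ $h^{1,0}\ge 5$), and even then a kernel element need not be decomposable --- a naive intersection count against the Grassmannian cone of decomposable $2$-vectors falls short, and the K\"ahler positivity of $\omega\wedge\bar\omega$ does not repair this. The trick you are missing (which is how the paper argues, following Catanese~\cite{CatFew}) is to fix a single class $\alpha\in H^1(\Gamma;\C)$ and bound the rank of $\cup\,\alpha\colon H^1(\Gamma;\C)\to H^2(\Gamma;\C)$: its kernel has dimension $\ge b_1(\Gamma)-b_2(\Gamma)\ge\DEF(\Gamma)\ge 2$ and contains $\alpha$ itself, so there is a $\beta$ independent of $\alpha$ with $\alpha\cup\beta=0$, and $\langle\alpha,\beta\rangle$ is then automatically a $2$-dimensional isotropic plane --- a decomposable kernel element for free. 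This plane lives in $H^1(X;\C)$, not in $H^{1,0}$, so one invokes Catanese's isotropic subspace theorem (his proof of the Siu--Beauville theorem, \cite{Cat}, \cite[Ch.~2]{ABCKT}) rather than classical Castelnuovo--de~Franchis; that theorem is exactly what absorbs the Hodge-type case analysis you were attempting by hand with the $(1,1)$-pairing.

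The second gap you do not flag. Even granted a fibration $f\colon X\to C$ onto a curve of genus $\ge 2$ with $V\subseteq f^*H^1(C;\C)$, it does \emph{not} follow that the Albanese image of $X$ is a curve: the factorization goes $X\to\mathrm{Alb}(X)\to\mathrm{Jac}(C)$ (your arrows point the wrong way), and the Albanese image merely surjects onto $C$; it is itself a curve precisely when $f^*$ is an isomorphism on $H^1$. The paper closes this with a countability argument: every class of $H^1(\Gamma;\C)$ lies in $f^*H^1(C;\C)$ for \emph{some} fibration $f$ onto a curve of genus $\ge 2$; such fibrations are determined by their action on integral cohomology, hence there are at most countably many; and $H^1(\Gamma;\C)$ is not a union of countably many proper subspaces, so a single $f$ satisfies $f^*H^1(C;\C)=H^1(\Gamma;\C)$, whence the Albanese factors through $f$ by its universal property and its image is the curve. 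Your first paragraph (Morse inequality and naturality of the cup product under $X\to B\Gamma$) agrees with the paper and is correct; everything after it needs the two repairs above.
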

In fact, Gromov proved that the fundamental group is commensurable with a surface group, and so one might
hope for a more detailed classification. For example, it is natural to wonder whether the kernel of the homomorphism 
on fundamental groups induced by the Albanese is finite. However, in the more than twenty years since Theorem~\ref{t:fewrel} 
was proved, there have been no improvements on this result. The main purpose of this paper is to prove the following definitive 
version of Theorem~\ref{t:fewrel}:
\begin{thm}\label{t:master}
A group of deficiency $\geq 2$ is the fundamental group of a compact K\"ahler manifold $X$ if and
only if it is isomorphic to the orbifold fundamental group of a curve of genus $g\geq 2$. The curve is 
the Albanese image of $X$ and the isomorphism is induced by the Albanese map of $X$.
\end{thm}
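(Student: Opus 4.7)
For the backward direction, I would start by observing that if $C$ is an orbifold Riemann surface of genus $g\ge 2$ with signature $(m_1,\ldots,m_n)$, then the standard orbifold surface-group presentation has $2g+n$ generators and $n+1$ relators, yielding $\DEF(\pi_1^{orb}(C))\ge 2g-1\ge 3$. Such orbifold surface groups are realised as $\pi_1$ of smooth K\"ahler surfaces (for instance as equivariant quotients of appropriate branched covers of $C\times\Sigma_2$), so they occur in the class of K\"ahler groups.

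For the forward direction, let $X$ be compact K\"ahler with $\Gamma=\pi_1(X)$ of deficiency $\ge 2$. The plan is to upgrade \thmref{t:fewrel} from ``the Albanese image of $X$ is a curve'' to the statement that $\Gamma$ is the orbifold fundamental group of that curve. I would first take the Stein factorisation of the Albanese map to obtain a holomorphic fibration $f\colon X\to C'$ with connected generic fibre onto a smooth curve, then equip $C'$ with the orbifold weights coming from the multiple fibres of $f$. Write $C$ for the resulting orbifold and $Q:=\pi_1^{orb}(C)$. The orbifold long-exact sequence of homotopy groups gives
\[
1\to K\to \Gamma\stackrel{f_*}{\lra} Q\to 1,
\]
in which $K$ is the image in $\Gamma$ of the fundamental group of a smooth fibre. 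Gromov's refinement of \thmref{t:fewrel}, that $\Gamma$ is commensurable with a surface group, combined with the classification of orbifold surface groups, should force $C$ to be hyperbolic of orbifold genus $g\ge 2$, so that $\DEF(Q)=2g-1\ge 3$. The theorem then reduces to the statement $K=1$.

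The main obstacle will be showing $K$ is trivial. I would split into the cases where $K$ is infinite and where $K$ is finite nontrivial. In the infinite case, both $K$ and $Q$ are infinite in the extension, and I would try to invoke $L^2$-Betti number estimates for extensions (Cheeger--Gromov for amenable $K$, L\"uck/Gaboriau more generally) to conclude $\beta^{(2)}_1(\Gamma)=0$; coupled with L\"uck's deficiency inequality
\[
\DEF(\Gamma)\ \le\ 1-\beta^{(2)}_0(\Gamma)+\beta^{(2)}_1(\Gamma)-\beta^{(2)}_2(\Gamma),
\]
together with $\beta^{(2)}_0(\Gamma)=0$ (since $\Gamma$ is infinite), this would give $\DEF(\Gamma)\le 1$, a contradiction. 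In the finite nontrivial case, I would argue via the Hopf formula and the five-term exact sequence in group homology, applied to the extension, that the deficiency of $\Gamma$ must strictly drop below $\DEF(Q)$, again contradicting the hypothesis. The genuine difficulty is the infinite-$K$ case: generic extensions do not yield $\beta^{(2)}_1(\Gamma)=0$, so the K\"ahler hypothesis must enter in an essential way, most plausibly through harmonic-form arguments on $X$ or through a structure theorem for K\"ahler manifolds fibred over hyperbolic curves. Once $K=1$, the isomorphism in the theorem is exactly $f_*$, which factors through the Albanese map by construction.
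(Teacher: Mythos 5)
Your skeleton is the same as the paper's: fiber $X$ over the orbifold Albanese curve, obtain $1\to K\to\Gamma\to Q\to 1$ with $K$ finitely generated, and kill $K$ by splitting into the infinite and the finite non-trivial cases. But the two cases are resolved in your sketch in the wrong way. The infinite-$K$ case, which you flag as ``the genuine difficulty,'' is in fact the case that closes immediately: the L\"uck--Gaboriau vanishing theorem (Theorem~\ref{t:LG}) needs \emph{only} that $K$ be finitely generated and infinite and that $Q$ be infinite --- no amenability, no harmonic forms, no further use of the K\"ahler condition. The geometry has already done all its work by producing the fibration and, crucially, the finite generation of $K$ (Catanese's lemma: $K$ is a quotient of $\pi_1$ of a smooth fiber), which is exactly the hypothesis that distinguishes this extension from a ``generic'' one. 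So $\ltb(\Gamma)=0$, contradicting $\ltb(\Gamma)\geq\DEF(\Gamma)-1\geq 1$, and there is nothing left to worry about.

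The real gap is the finite non-trivial case. Two things go wrong. First, the five-term sequence $H_2(\Gamma;\Z)\to H_2(Q;\Z)\to (K_{ab})_Q\to H_1(\Gamma;\Z)\to H_1(Q;\Z)\to 0$ with $K$ finite gives $\rk H_1(\Gamma)=\rk H_1(Q)=2g$ and at best forces one extra generator of $H_2(\Gamma;\Z)$; the Hopf-formula bound $\DEF(\Gamma)\leq \rk H_1(\Gamma;\Z)-d(H_2(\Gamma;\Z))$ then yields only $\DEF(\Gamma)\leq 2g-1=\DEF(Q)$, with no strict drop --- over $\Z$ a finite normal subgroup need contribute nothing to $H_2$. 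Second, even a strict drop would not be a contradiction: $\DEF(\Gamma)\leq 2g-2$ is still $\geq 2$ for $g\geq 2$, so it is consistent with your hypothesis. What is actually needed (Lemma~\ref{l:coho} in the paper) is: take $a\in K$ of prime order $p$; its centralizer is a finite-index subgroup of $\Gamma$ which is a central extension of a finite-index surface subgroup $\Delta\leq Q$ by $\Z_p$; pulling back over an index-$p$ subgroup $\bar\Delta\leq\Delta$ kills the Euler class in $H^2(\bar\Delta;\Z_p)$, so $\Gamma$ has a finite-index subgroup isomorphic to $\Z_p\times\bar\Delta$; the Morse inequality with $\Z_p$-coefficients plus K\"unneth give $\DEF(\Z_p\times\bar\Delta)\leq -1$; and Reidemeister--Schreier shows that positive deficiency persists under passage to finite-index subgroups, giving the contradiction. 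The mod-$p$ coefficients and the passage to a finite-index subgroup where the extension becomes a product are both essential and are exactly what your sketch is missing. (A smaller point: you should also pin down $g\geq 2$ honestly; in the paper this falls out of Catanese's isotropic-subspace argument, which produces the fibration over a curve of genus $\geq 2$ directly, rather than from commensurability considerations.)
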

The orbifold structure on the target is determined by the multiplicities of the singular fibers of the Albanese.
Since the kernel of the map from the orbifold fundamental group to the ordinary fundamental group is huge (not even
finitely generated~\cite{Gri}) whenever the orbifold structure is non-trivial, we see that the Albanese map usually does 
not induce a homomorphism with finite kernel on the ordinary fundamental group.
It is straightforward to see that all such orbifold fundamental groups are in fact fundamental groups of smooth complex projective 
varieties, and that the deficiency equals $2g-1$; cf.~Section~\ref{s:fiber} below. The latter statement gives the following result:
\begin{cor}\label{c:evendef}
There are no K\"ahler groups with positive even deficiency.
\end{cor}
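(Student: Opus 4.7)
The plan is to derive the corollary as an immediate arithmetic consequence of Theorem~\ref{t:master} combined with the explicit value of the deficiency of an orbifold surface group.

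Suppose, for contradiction, that some K\"ahler group $\Gamma$ has positive even deficiency; write $\DEF(\Gamma) = 2k$ with $k \geq 1$, so in particular $\DEF(\Gamma) \geq 2$. Then Theorem~\ref{t:master} identifies $\Gamma$ with the orbifold fundamental group of a smooth projective curve of some genus $g \geq 2$. By the deficiency formula announced in the paragraph immediately preceding the corollary, $\DEF(\Gamma) = 2g - 1$. Since $2g-1$ is odd, this contradicts the parity of $\DEF(\Gamma)$, and the corollary follows.

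The only ingredient beyond Theorem~\ref{t:master} itself is the precise equality $\DEF(\Gamma) = 2g-1$ for the orbifold surface group. The lower bound $\DEF(\Gamma) \geq 2g-1$ is immediate from the standard orbifold presentation of an orbifold surface group of genus $g$ with $n$ cone points: $2g+n$ generators (the usual $a_i,b_i$ together with one generator per cone point) and $n+1$ relators (the torsion relations together with the long commutator-cone relation), giving $(2g+n)-(n+1)=2g-1$. The matching upper bound is the only step that requires genuine work, and I would expect it to be the main obstacle; I anticipate it following either from a general cohomological inequality bounding $\DEF$ in terms of $\ltb$ and the rank of $H_2(\Gamma;\Z)$, or by passing to a torsion-free finite-index surface subgroup of higher genus and combining Euler-characteristic multiplicativity with the Hopf-style bound on the deficiency of a surface group. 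In any case, the computation has been deferred by the author to Section~\ref{s:fiber}, and once it is in hand the corollary is nothing more than the observation that $2g-1$ is never even.
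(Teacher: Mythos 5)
Your proof is correct and is essentially the paper's own argument: the corollary is deduced from Theorem~\ref{t:master} together with the equality $\DEF(\pi_1^{orb}(C_g))=2g-1$ of Lemma~\ref{l:def}, whose nontrivial upper bound is exactly the cohomological (Morse/Philip Hall) inequality $\DEF(\Gamma)\leq b_1(\Gamma)-b_2(\Gamma)=2g-1$ that you anticipated. Nothing further is needed.
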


Theorem~\ref{t:master} cannot be obtained from the 
arguments of Green--Lazarsfeld~\cite{GL} or Gromov~\cite{G}, and a new ingredient is required. Our proof, like 
Gromov's, uses $\ell^2$-cohomology, starting from the well known fact that groups of deficiency at least two have 
positive first $\ell^2$-Betti number. However, unlike in Gromov's approach, we do not use any $\ell^2$-Hodge theory 
or complex analysis, but instead shift the focus onto the formal properties of the first $\ell^2$-Betti number, replacing 
analytic techniques by algebraic ones. A crucial tool in our argument is the vanishing theorem for the first $\ell^2$-Betti 
numbers of groups admitting finitely generated infinite normal subgroups of infinite index due to L\"uck~\cite{lueckMA} 
and Gaboriau~\cite{gaboriauIHES}.

The question of classifying one-relator K\"ahler groups, which appeared in~\cite{A,Ara} not long after the works of 
Green--Lazarsfeld~\cite{GL} and Gromov~\cite{G}, is settled by the following result, to which a completely different route
was taken recently by Biswas and Mj~\cite{BM}.
\begin{thm}\label{t:main1}
An infinite one-relator group is the fundamental group of a compact K\"ahler manifold $X$
if and only if it is isomorphic to the orbifold fundamental group of an orbifold of genus $g\geq 1$ with at most one point
with multiplicity $>1$. Moreover, the isomorphism is induced by the Albanese map of $X$.
\end{thm}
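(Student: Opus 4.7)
The plan is to stratify by the minimum number of generators $n$ in a one-relator presentation of $\Gamma$: such a presentation yields $\DEF(\Gamma) \geq n - 1$, so the case $n \geq 3$ falls under Theorem~\ref{t:master}, while the case $n = 2$ must be handled independently. The ``if'' direction follows from the Tietze reduction of $\langle a_1, b_1, \ldots, a_g, b_g, c \mid c^m,\, [a_1, b_1] \cdots [a_g, b_g]\, c \rangle$ to the one-relator presentation $\langle a_1, b_1, \ldots, a_g, b_g \mid ([a_1, b_1] \cdots [a_g, b_g])^m \rangle$, together with the standard realization of these orbifold groups by Riemann surfaces and by holomorphic fibrations with a single multiple fiber.

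Assume first that $\Gamma$ admits a one-relator presentation with $n \geq 3$ generators, so $\DEF(\Gamma) \geq 2$ and Theorem~\ref{t:master} identifies $\Gamma$ with $\pi_1^{orb}(\Sm_g, m_1, \ldots, m_k)$ for some $g \geq 2$, with the isomorphism induced by the Albanese. To conclude I would argue $k \leq 1$ using B.~B.~Newman's theorem that in a one-relator group with torsion $\langle X \mid w^m \rangle$ every finite subgroup is conjugate into $\langle w \rangle$: this leaves a single conjugacy class of maximal finite cyclic subgroups, contradicting the $k \geq 2$ pairwise non-conjugate maximal finite cyclic subgroups $\langle c_i \rangle$ in the orbifold group.

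The case $n = 2$ requires a genuinely new analysis. Write $\Gamma = \langle a, b \mid r \rangle$; the abelianization has rank $2$ or $1$ according to whether the exponent-sum vector of $r$ vanishes, and the parity of $b_1$ for K\"ahler groups forces $r \in [F_2, F_2]$ and $b_1(\Gamma) = 2$. The Albanese $\alpha\colon X \to \mathrm{Alb}(X) = E$ is then a surjection onto an elliptic curve, and passing to the orbifold base gives a surjection
\[
\Gamma \twoheadrightarrow \pi_1^{orb}(E, m_1, \ldots, m_k)
\]
whose kernel is the image of the fundamental group of a smooth fiber. Once this kernel is shown to be trivial, the Newman argument of the previous paragraph again forces $k \leq 1$, identifying $\Gamma$ with $\Z^2$ or $\langle a, b \mid [a, b]^m \rangle$ as claimed.

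The hardest step is establishing triviality of the above kernel. My attempt would use the L\"uck--Gaboriau vanishing theorem featured in the proof of Theorem~\ref{t:master}: a nontrivial kernel would be a finitely generated infinite normal subgroup of infinite index, forcing $\beta_1^{(2)}(\Gamma) = 0$; the Dicks--Linnell formula $\beta_1^{(2)}(\langle a, b \mid w^m \rangle) = 1 - 1/m$ for primitive $w$ then forces $r$ not to be a proper power, so $\Gamma$ is torsion-free. Eliminating this residual torsion-free case seems to be the true obstacle; I would attack it by combining the Magnus hierarchy of torsion-free two-generator one-relator groups with the constraint that the quotient is either $\Z^2$ or the orbifold group of a torus with one marked point, aiming to show that a nontrivial kernel is incompatible with the relator structure that the Magnus analysis imposes on $\Gamma$.
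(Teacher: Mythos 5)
Your reduction is sound in outline and overlaps substantially with the paper's own route (Albanese onto a curve, Catanese's orbifold exact sequence, L\"uck--Gaboriau plus Dicks--Linnell), and your Newman-type argument for $k\leq 1$ via conjugacy classes of finite subgroups is a legitimate substitute for the paper's generator count (the paper instead uses the Peczynski--Rosenberger--Zieschang fact that $\pi_1^{orb}(C_g)$ with $k>0$ cone points needs $2g+k-1$ generators, while $\Gamma$ has only $2g$). But the two-generator branch has two genuine gaps. First, your assertion that ``a nontrivial kernel would be a finitely generated \emph{infinite} normal subgroup'' is unjustified: the kernel is a quotient of the fundamental group of a smooth fiber and could perfectly well be finite and nontrivial. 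This case must be excluded separately; the paper does it with the $\ell^2$ Lyndon--Hochschild--Serre identity $\ltb(\Gamma)=\ltb(Q)/|K|$ for finite $K$, which together with Dicks--Linnell yields $|K|=(1-1/m_1)/(1-1/n)<2$ whenever $n>1$, and by torsion-freeness of $\Gamma$ when $n=1$. This gap is fillable with tools already in the paper, but you have not filled it.

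The second gap is the serious one, and you correctly identify it as ``the true obstacle'' without resolving it: the residual case where $r$ is not a proper power (so $\ltb(\Gamma)=0$ and L\"uck--Gaboriau gives no contradiction) and the finitely generated kernel is infinite. Your proposed attack via the Magnus hierarchy is only a declaration of intent; nothing in the proposal shows that the hierarchy is incompatible with an infinite finitely generated kernel over $\Z^2$, and it is far from clear that this line closes. The paper's resolution is entirely different and uses an external input you are missing: when there are no multiple fibers the quotient is $\Z^2$ and the map $\Gamma\to\Z^2$ is the abelianization, so $K=[\Gamma,\Gamma]$; a theorem of Bieri (cited as \cite[Corollary~B(b)]{Bieri2}) then says that a finitely generated commutator subgroup of such a positive-deficiency group must be trivial, forcing $\Gamma\cong\Z^2$. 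Without this (or an equivalent) ingredient, your proof of Theorem~\ref{t:main1} is incomplete precisely in the genus-one, torsion-free, two-generator case.
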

If the number of generators of a one-relator group is at least $3$, then this follows from Theorem~\ref{t:master}.  
However, instead of deducing most of Theorem~\ref{t:main1} from Theorem~\ref{t:master}, we will give a direct argument 
for Theorem~\ref{t:main1} in Section~\ref{s:onerel} as a warmup for the proof of Theorem~\ref{t:master}. On the one hand, 
some of the details are actually simpler in this special case, and, on the other hand, this proof will also handle most 
one-relator groups with two generators, to which Theorem~\ref{t:master} does not apply because their deficiency is one. 

In Section~\ref{s:main} we state and prove a more detailed version of Theorem~\ref{t:master}, and we discuss 
a possible extension to groups of deficiency one. In Section~\ref{s:limit} we apply some of the arguments from the 
proof of Theorem~\ref{t:master} to show that a non-Abelian limit group in the sense of 
Sela~\cite{Sela} is a K\"ahler group if and only if it is a surface group of genus at least two.
In Section~\ref{s:relax} we show that our results are valid not only for K\"ahler manifolds, but also for non-K\"ahler
compact complex surfaces and for so-called Vaisman manifolds. The latter are a subclass of the locally conformally
K\"ahler manifolds, cf.~\cite{DO}. 

\section{Fibered K\"ahler manifolds and orbifolds}\label{s:fiber}

Suppose we have a surjective holomorphic map $f\colon X\longrightarrow C_g$ with connected fibers from a compact
complex manifold to a curve of genus $g\geq 1$. Then the induced map on fundamental groups is surjective. A subtle 
problem, which is easy to overlook, is 
that the kernel of this induced map may not be finitely generated. Indeed, if there are multiple fibers, it is not. For me, 
this point was clarified by Catanese's paper~\cite{CatF}, although it seems that Simpson~\cite{S} and perhaps others
understood it much earlier. In order to have a finitely generated kernel, one has to replace the usual fundamental 
group of $C_g$ by its orbifold fundamental group that takes into account the multiplicities of the multiple fibers.

\begin{lem}{\rm (\cite{CatF})}\label{l:C}
Let $f\colon X\longrightarrow C_g$ be a surjective holomorphic map with connected fibers from a compact
complex manifold to a curve of genus $g\geq 1$. By marking the critical values $p_1,\ldots,p_k$ of $f$ with suitable 
integral multiplicities $m_i\geq 1$, one can define the orbifold fundamental group $\pi_1^{orb}(C_g)$ of $C_g$
with respect to these multiplicities, so that one obtains a short exact sequence
\begin{equation}\label{eq:basic}
1\longrightarrow K\longrightarrow \pi_1(X)\longrightarrow \pi_1^{orb}(C_g)\longrightarrow 1
\end{equation}
in which the kernel $K$ is finitely generated, since it is a quotient of the fundamental group of a regular fiber of $f$.
\end{lem}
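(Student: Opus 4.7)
The plan is to analyze $\pi_1(X)$ by separating the contributions from the smooth locus of $f$ over $B := C_g \setminus \{p_1,\ldots,p_k\}$ and from small neighborhoods of each critical fiber, then gluing the pieces with Van Kampen.

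First I would set $U = f^{-1}(B)$. Over $B$ the map $f$ is a proper holomorphic submersion, so Ehresmann's theorem exhibits $f|_U \colon U \to B$ as a locally trivial smooth fiber bundle with compact connected fiber $F$. Since $g \geq 1$, the (possibly punctured) surface $B$ is aspherical, so $\pi_2(B) = 0$, and the homotopy long exact sequence of this fibration collapses to
$$1 \longrightarrow \pi_1(F) \longrightarrow \pi_1(U) \longrightarrow \pi_1(B) \longrightarrow 1.$$

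Next I would choose pairwise disjoint holomorphic disks $D_i \ni p_i$ and set $V_i = f^{-1}(D_i)$, so that $X = U \cup \bigcup_i V_i$ with $U \cap V_i = f^{-1}(D_i^*)$ a fiber bundle over the punctured disk. Van Kampen then expresses $\pi_1(X)$ as $\pi_1(U)$ modulo the normal closure of the kernels of the maps $\pi_1(U \cap V_i) \twoheadrightarrow \pi_1(V_i)$. The hard part, and the main obstacle, is the purely local computation of these kernels, which is where the multiplicities enter. I would define $m_i$ to be the gcd of the multiplicities of the irreducible components of $f^{-1}(p_i)$, and show that $V_i$ deformation retracts onto its central fiber and that in $\pi_1(V_i)$ the meridian $\mu_i$ of $D_i^*$ satisfies $\mu_i^{m_i} = \alpha_i$ for some $\alpha_i$ in the image of $\pi_1(F) \to \pi_1(V_i)$. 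The key geometric input is that a local holomorphic transversal to a component of multiplicity $m$ meets a nearby smooth fiber in $m$ points, and taking the gcd over the components of the central fiber gives precisely the asserted relation; this is a Picard--Lefschetz-style computation that constitutes the technical heart of the lemma.

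Combining these ingredients, $\pi_1(X)$ is the quotient of $\pi_1(U)$ by the normal closure of the relators $\mu_i^{m_i}\alpha_i^{-1}$, so the image $K$ of $\pi_1(F)$ is normal in $\pi_1(X)$ and $\pi_1(X)/K$ is the quotient of $\pi_1(B)$ by $\langle\!\langle \mu_1^{m_1},\ldots,\mu_k^{m_k}\rangle\!\rangle$, which is the standard presentation of the orbifold fundamental group $\pi_1^{orb}(C_g)$. This yields the desired short exact sequence \eqref{eq:basic}. Finally, $K$ is finitely generated since it is a quotient of $\pi_1(F)$ and $F$ is a compact manifold.
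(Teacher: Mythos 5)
The paper does not actually prove this lemma --- it is quoted from Catanese~\cite{CatF} --- and your decomposition ($U=f^{-1}(B)$ with Ehresmann over the smooth locus, tubes $V_i=f^{-1}(D_i)$ over disks around the critical values, then Van Kampen) is exactly the standard route taken there. Most of your outline is sound: the fibration sequence over the aspherical base $B$, the surjectivity of $\pi_1(U\cap V_i)\to\pi_1(V_i)$, the choice $m_i=\gcd_j m_{ij}$, and the Bezout-type argument showing that a lift $\tilde\mu_i$ of the meridian satisfies $\tilde\mu_i^{m_i}\in\mathrm{im}\,\pi_1(F)$ in $\pi_1(V_i)$ (because a transversal meridian $\delta_{ij}$ of a component of multiplicity $m_{ij}$ bounds a disk in $V_i$ and maps to $\mu_i^{m_{ij}}$ in $\pi_1(D_i^*)$).

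The one genuinely false step is the assertion that $\pi_1(X)$ is the quotient of $\pi_1(U)$ by the normal closure of the single relators $\tilde\mu_i^{m_i}\alpha_i^{-1}$. The kernel of $\pi_1(U\cap V_i)\twoheadrightarrow\pi_1(V_i)$ is normally generated by meridians of \emph{all} irreducible components of $f^{-1}(p_i)$, and in general it contains vanishing-cycle relations, i.e.\ nontrivial elements of $\mathrm{im}\,\pi_1(F)$ that die in $\pi_1(V_i)$. For a reduced nodal fiber one has $m_i=1$ and $\alpha_i$ can be taken trivial, yet the vanishing cycle is still killed; so your claimed presentation of $\pi_1(X)$ is wrong already for Lefschetz-type fibrations. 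The repair is easy and does not change the conclusion: work modulo $K=\mathrm{im}\,\pi_1(F)$ from the start. Since $K$ is normal (it is the image in $\pi_1(X)$ of $\ker(\pi_1(U)\to\pi_1(B))$), one has $\pi_1(X)/K\cong\pi_1(B)/\langle\!\langle f_*(\ker_i)\rangle\!\rangle$, and $f_*(\ker_i)\subset\pi_1(D_i^*)\cong\Z$ is generated by the classes $\mu_i^{m_{ij}}$, hence equals $\langle\mu_i^{m_i}\rangle$; the extra relators you omitted are precisely those absorbed into $K$. With this correction the argument is complete: $\pi_1(X)/K\cong\pi_1^{orb}(C_g)$ and $K$ is a quotient of $\pi_1(F)$, hence finitely generated.
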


All these orbifold fundamental groups are also fundamental groups of compact K\"ahler manifolds, and even of smooth 
complex projective varieties. They are real surface groups only when the orbifold structure is trivial, but in all cases they 
can be realized by projective complex elliptic surfaces with appropriate multiple fibers with smooth reduction, 
see~\cite[Section~2.2.1]{FM}. 
Alternatively, and less explicitly, one can use the fact that each such orbifold
fundamental group contains a real surface group as a subgroup of finite index~\cite{BN}, and then 
apply~\cite[Lemma~1.15]{ABCKT}.

The orbifold fundamental groups are of the form
\begin{equation}\label{eq:presgen}
\Gamma = \langle x_1,y_1,\ldots,x_g,y_g,z_1,\dots,z_k\ \vert \ [x_1,y_1]\ldots [x_g,y_g]\cdot z_1\ldots z_k=z_1^{m_1}=\ldots =z_k^{m_k}=1 \rangle
\end{equation}
for some $g\geq 1$ and $k\geq 0$, with $m_i\geq 2$ for all $i$. (Singular fibers with multiplicity $1$ do not 
contribute.)

\begin{lem}\label{l:def}
The deficiency of an orbifold group as in~\eqref{eq:presgen} is $\DEF (\Gamma)=2g-1$.
\end{lem}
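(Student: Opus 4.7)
The lower bound is immediate from the presentation~\eqref{eq:presgen}: it uses $2g+k$ generators and $k+1$ relators, so $\DEF(\Gamma)\geq(2g+k)-(k+1)=2g-1$.

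For the matching upper bound, my plan is to invoke the standard homological inequality $\DEF(\Gamma)\leq b_1(\Gamma;\bQ)-b_2(\Gamma;\bQ)$. For any presentation with $n$ generators and $m$ relators, the resulting presentation $2$-complex $K$ satisfies $\chi(K)=1-n+m$ and also $\chi(K)=1-b_1(\Gamma;\bQ)+b_2(K;\bQ)$; the natural map $K\to B\Gamma$ is surjective on $H_2(\,\cdot\,;\bQ)$, so $b_2(K;\bQ)\geq b_2(\Gamma;\bQ)$, and the inequality follows after taking the maximum over presentations.

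It then remains to compute the two rational Betti numbers of $\Gamma$. Abelianising and tensoring with $\bQ$, the torsion relations $m_iz_i=0$ force each $z_i$ to zero and render the long commutator-product relation redundant, so $H_1(\Gamma;\bQ)\cong\bQ^{2g}$, giving $b_1(\Gamma;\bQ)=2g$. For the second Betti number I would exploit the fact already recorded in the excerpt that $\Gamma$ contains the fundamental group of a closed orientable surface $\Sm_h$ as a finite-index subgroup, via~\cite{BN}; the hypothesis $g\geq 1$ forces $\chi^{orb}(\Gamma)\leq 0$ and hence $h\geq 1$. Transfer (equivalently the Hochschild--Serre spectral sequence with rational coefficients) identifies $H_2(\Gamma;\bQ)$ with the invariants $H_2(\Sm_h;\bQ)^{\Gamma/\pi_1(\Sm_h)}$, and since the orbifold is orientable the finite deck group acts on $\Sm_h$ by orientation-preserving homeomorphisms and hence trivially on $H_2(\Sm_h;\bQ)=\bQ$. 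Thus $b_2(\Gamma;\bQ)=1$ and $\DEF(\Gamma)\leq 2g-1$.

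The delicate step is the $b_2$ computation, which rests on producing the torsion-free surface subgroup of finite index and then appealing to orientability of the covering. It is worth noting that the weaker bound available from $\ell^2$-Betti numbers, namely L\"uck's $\DEF(\Gamma)\leq 1+b_1^{(2)}(\Gamma)-b_2^{(2)}(\Gamma)=2g-1+\sum_i(1-1/m_i)$, is \emph{not} tight as soon as $k\geq 1$, so the torsion in $\Gamma$ genuinely matters and working with ordinary rational rather than $\ell^2$ Betti numbers is what makes the argument sharp.
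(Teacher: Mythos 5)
Your proof is correct and follows the same route as the paper: the lower bound is read off from the presentation~\eqref{eq:presgen}, and the upper bound comes from the Morse (Philip Hall) inequality $\DEF(\Gamma)\leq b_1(\Gamma)-b_2(\Gamma)$. The paper simply asserts that the right-hand side equals $2g-1$; your computation of $b_1=2g$ by abelianisation and of $b_2=1$ via the finite-index surface subgroup of~\cite{BN} and rational transfer supplies exactly the detail the paper leaves implicit.
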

\begin{proof}
The given presentation shows that the deficiency cannot be smaller than $2g-1$. It cannot be larger either because of the Morse 
inequality\footnote{This instance of the ``Morse inequality'' is often called ``Philip Hall's inequality'' by algebraists, because 
Epstein~\cite{Epstein} wrote it was ``essentially due'' to Hall.}
\begin{equation}\label{eq:MM}
\DEF (\Gamma)\leq b_1(\Gamma)-b_2(\Gamma) \ .
\end{equation}
in which the right-hand-side is $2g-1$ in this case.
\end{proof}

If the number $k$ of orbifold points is positive, then one can use the first relation in~\eqref{eq:presgen} to eliminate 
one of the generators $z_i$ from the presentation. Therefore, for $k\leq 1$ the orbifold fundamental group is a 
one-relator group of the form
\begin{equation}\label{eq:pres}
\Gamma = \langle x_1,y_1,\ldots,x_g,y_g \ \vert \ ([x_1,y_1]\cdot\ldots\cdot [x_g,y_g])^m =1 \rangle
\end{equation}
for some $g$ and $m$, both $\geq 1$. These are the groups appearing in Theorem~\ref{t:main1}.

\section{Background on $\ell^2$-Betti numbers}\label{s:ell}

The $\ell^2$-Betti numbers are defined for all countable groups. In this section we collect those results
about them that will be needed in the proofs of our theorems. For details we refer to L\"uck's monograph~\cite{lueckl2}.

For our purposes, the starting point is the relationship between the deficiency and the first $\ell^2$-Betti number $\ltb$.
\begin{prop}{\rm (\cite{hill})}\label{p:hill}
For every finitely presented group one has 
\begin{equation}\label{eq:Morse}
\ltb (\Gamma) \geq \DEF (\Gamma) -1 \ .
\end{equation}
In the case of equality the presentation complex of a presentation realizing the deficiency is aspherical.
\end{prop}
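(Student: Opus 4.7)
The plan is to compute the $\ell^2$-Euler characteristic of a presentation $2$-complex and compare it with the ordinary Euler characteristic. Given any finite presentation $\langle x_1,\dots,x_g\mid r_1,\dots,r_k\rangle$ of $\Gamma$, I would let $X$ denote the associated presentation $2$-complex, so that $\chi(X)=1-g+k$. Its universal cover $\tilde X$ is a free $\Gamma$-CW complex of finite type concentrated in dimensions $0$, $1$, $2$. Writing $b_i^{(2)}$ for $\ell^2$-Betti numbers, so that $\ltb(\Gamma)=b_1^{(2)}(\Gamma)$, the $\ell^2$-Euler characteristic formula yields
\[
1-g+k \;=\; b_0^{(2)}(\tilde X)-b_1^{(2)}(\tilde X)+b_2^{(2)}(\tilde X).
\]
A standard input here is that $b_i^{(2)}(\tilde X)=b_i^{(2)}(\Gamma)$ for $i\leq 1$: any $K(\Gamma,1)$ is obtained from $X$ by attaching cells of dimension $\geq 3$, which do not affect reduced $\ell^2$-homology in degrees $\leq 1$.

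Next I would deduce the inequality. Assuming $\Gamma$ is infinite, so that $b_0^{(2)}(\Gamma)=0$, and using $b_2^{(2)}(\tilde X)\geq 0$, the identity rearranges to
\[
\ltb(\Gamma)\;=\;b_2^{(2)}(\tilde X)+(g-k)-1\;\geq\; (g-k)-1.
\]
Maximizing over finite presentations then yields $\ltb(\Gamma)\geq \DEF(\Gamma)-1$. The finite-group case is immediate, since there $\DEF(\Gamma)\leq 0$ and $\ltb(\Gamma)=0$.

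For the equality statement I would fix a presentation realizing the deficiency; the rearranged identity then forces $b_2^{(2)}(\tilde X)=0$. Because $X$ has no cells of dimension $\geq 3$, the reduced $\ell^2$-homology of $\tilde X$ in degree two equals the closed $\Gamma$-invariant subspace $\ker d_2^{(2)}\subset \ell^2(\Gamma)^k$, with no quotient taken. The key point is to invoke the faithfulness of von Neumann dimension on Hilbert modules over the group von Neumann algebra: vanishing of the dimension forces $\ker d_2^{(2)}=\{0\}$. Restricting along the inclusion $\Z[\Gamma]\hookrightarrow \ell^2(\Gamma)$ then gives $\ker d_2=0$ on integral cellular chains, so $H_2(\tilde X;\Z)=0$. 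Since $\tilde X$ is simply connected and $H_n(\tilde X;\Z)=0$ for every $n\geq 2$ (by dimension for $n\geq 3$), Hurewicz and Whitehead force $\tilde X$ to be contractible, and hence $X$ is aspherical.

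The main obstacle I anticipate is precisely this final passage from the dimension-theoretic vanishing $b_2^{(2)}(\tilde X)=0$ to the algebraic vanishing $\ker d_2=0$, since all preceding steps are essentially bookkeeping with the $\ell^2$-Euler characteristic. The essential non-formal input is the faithfulness of the canonical tracial state on the group von Neumann algebra, which guarantees that a nonzero closed $\Gamma$-invariant Hilbert subspace of $\ell^2(\Gamma)^k$ has strictly positive von Neumann dimension.
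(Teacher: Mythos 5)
Your argument is correct and is exactly the standard proof of this statement: the paper itself gives no proof, deferring the inequality to folklore/Gromov and the asphericity statement to Hillman, and your $\ell^2$-Euler-characteristic bookkeeping combined with the faithfulness of von Neumann dimension (to pass from $b_2^{(2)}(\tilde X)=0$ to $\ker d_2=0$ and hence to contractibility of $\tilde X$) is precisely the argument in those sources. The only point worth flagging is that your "rearranged identity" in the equality case presupposes $\Gamma$ infinite, which is automatic because equality would force $\DEF(\Gamma)=1>0$, impossible for a finite group.
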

The inequality~\eqref{eq:Morse} has been part of the folklore for a very long time, and appears already in~\cite{G}.
It is a special case of the Morse inequalities in the $\ell^2$ setting. For the case of equality we refer to the
paper by Hillman~\cite{hill}.

Proposition~\ref{p:hill} shows that groups of deficiency $\geq 2$ have positive first $\ell^2$-Betti number. For one-relator groups
we have the following more precise statement, sharpening the predictions in~\cite{G,lueckl2}:
\begin{prop}{\rm (Dicks--Linnell~\cite{DL})}\label{t:DL}
Let $\Gamma$ be an infinite group given by $a\geq 2$ generators and one relation $r$. One may assume that $r$ is 
cyclically reduced and of the form $r=s^n$, with $n\geq 1$ and $s$ not a proper power. Then 
$$
\ltb (\Gamma) = -\chi (\Gamma) = a-1-\frac{1}{n} \ .
$$
\end{prop}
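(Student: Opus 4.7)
The plan is to exploit the rigid algebraic structure of one-relator groups---Magnus's torsion theorem together with Lyndon's identity theorem---to build an explicit free $\bZ\Gamma$-resolution of the trivial module $\bZ$, and then read off $\ltb$ by dimension arithmetic in the group von Neumann algebra $\mathcal{N}(\Gamma)$. By Magnus's theorem, the image of $s$ in $\Gamma$ has order exactly $n$, and Lyndon's theorem yields the beginning of a resolution
\begin{equation*}
\bZ\Gamma\otimes_{\bZ[\langle s\rangle]}\bZ \lra \bZ\Gamma^{a} \lra \bZ\Gamma \lra \bZ \lra 0,
\end{equation*}
with $\langle s\rangle\cong\bZ/n$. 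Splicing this with the standard $2$-periodic resolution of $\bZ$ over $\bZ[\bZ/n]$---whose boundaries alternate between multiplication by $s-1$ and by $N:=1+s+\cdots+s^{n-1}$---and inducing up to $\bZ\Gamma$ yields a complete free resolution of $\bZ$ over $\bZ\Gamma$.

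From this resolution I would extract the $\ell^2$-Betti numbers in three steps. First, $\beta_0(\Gamma)=0$ because $\Gamma$ is infinite. Second, the element $p:=\tfrac{1}{n}N\in\mathcal{N}(\Gamma)$ is a self-adjoint projection (since $s^n=1$) of trace $1/n$ whose range is $\ker(s-1)$; spectrally decomposing $s$ into its $n$-th roots of unity identifies $\ker(s-1)=\mathrm{im}(N)$ and $\ker(N)=\mathrm{im}(s-1)$ as $\mathcal{N}(\Gamma)$-modules, so the homology of the $2$-periodic tail vanishes and $\beta_i(\Gamma)=0$ for all $i\ge2$. Third, the $\ell^2$ Euler--Poincar\'e formula then identifies $-\ltb(\Gamma)$ with the rational Euler characteristic $\chi(\Gamma)$; working over $\bQ\Gamma$, the induced module $\bQ\Gamma\otimes_{\bQ[\langle s\rangle]}\bQ$ is the projective summand $\bQ\Gamma\cdot p$ of $\mathcal{N}(\Gamma)$-dimension $1/n$, so the resolution collapses to projective length $2$ with $\chi(\Gamma)=1-a+\tfrac{1}{n}$. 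Rearranging gives $\ltb(\Gamma)=a-1-\tfrac{1}{n}$.

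The main obstacle is the dimension cancellation forcing vanishing in degrees $i\ge2$: it hinges on $s$ having \emph{finite} order $n$ in $\Gamma$, which is precisely what makes $\tfrac{1}{n}N$ a genuine projection of computable trace in $\mathcal{N}(\Gamma)$. The mechanism is sharp: the torsion-free case $n=1$ recovers $\ltb=a-2$ via the aspherical one-relator presentation complex and Proposition~\ref{p:hill}, while for $n\ge2$ the correction $-1/n$ is a genuine contribution of the torsion cyclic subgroup $\langle s\rangle$ that is invisible to classical Morse-type bounds and requires the $\ell^2$ machinery.
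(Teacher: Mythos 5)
The paper does not prove this proposition at all --- it is imported verbatim from Dicks--Linnell \cite{DL} and used as a black box --- so your attempt has to be measured against the original theorem, which is genuinely hard. Your setup is correct as far as it goes: by the Karrass--Magnus--Solitar torsion theorem the image of $s$ has order exactly $n$, Lyndon's Identity Theorem identifies the relation module with $\bZ\Gamma\otimes_{\bZ\langle s\rangle}\bZ$, the spliced resolution is the standard one, $\beta_0=0$ because $\Gamma$ is infinite, the vanishing in degrees $\geq 3$ via the spectral decomposition of the unitary $s$ is fine, and the Euler characteristic computation $\chi(\Gamma)=1-a+\tfrac1n$ from the length-two projective $\bQ\Gamma$-resolution $0\to\bQ\Gamma p\to\bQ\Gamma^{a}\to\bQ\Gamma\to\bQ\to0$ is correct.

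The gap is the single clause ``so \dots\ $\beta_i(\Gamma)=0$ for all $i\ge 2$.'' The periodic tail only controls homology in degrees $\geq 3$: in degree $2$ the \emph{outgoing} differential is not $N$ or $s-1$ but the Fox-derivative map $\partial_2$ into $\bZ\Gamma^{a}$, and what you must show is that $\partial_2$ restricted to the summand $\mathcal{N}(\Gamma)p$ (of dimension $1/n$) has zero-dimensional kernel. This does not follow from the algebraic injectivity of $\bQ\Gamma p\hookrightarrow\bQ\Gamma^{a}$, because $\mathcal{N}(\Gamma)\otimes_{\bQ\Gamma}-$ does not preserve injectivity even for maps of finitely generated free modules: for $\Gamma=F_2$ the inclusion of the augmentation ideal $\bQ F_2^{\,2}\cong I\hookrightarrow\bQ F_2$ acquires a kernel of von Neumann dimension $1$ after this base change (this is exactly why $\ltb(F_2)=1$). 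The assertion $\beta_2(\Gamma)=0$ is precisely the hard content of the Dicks--Linnell theorem --- an Atiyah-conjecture-type statement that the matrix of Fox derivatives becomes ``full'' over the algebra of affiliated operators --- and its proof requires substantial extra input (Linnell's division-ring and localization techniques, Howie's results on locally indicable groups, and an induction along the Magnus hierarchy). Without it, your resolution yields only $\ltb(\Gamma)=a-1-\tfrac1n+\beta_2(\Gamma)\geq a-1-\tfrac1n$, a sharpening of the lower bound in Proposition~\ref{p:hill} but not the claimed equality.
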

Here $\chi$ denotes the rational Euler characteristic, which is known to equal the $\ell^2$-Euler characteristic.
This proposition contains as special cases the computation of the first $\ell^2$-Betti numbers for surface groups and 
for the fundamental groups of two-dimensional orbifolds with exactly one point with multiplicity $>1$. We can easily extend 
to orbifolds with an arbitrary number of points with multiplicity, although this is not needed for the proofs of our main theorems.
\begin{prop}
Let $\pi_1^{orb}(C_g)$ be the fundamental group of an orbifold of genus $g\geq 1$ as in~\eqref{eq:presgen}. Then 
$$
\ltb (\pi_1^{orb}(C_g)) = -\chi (\pi_1^{orb}(C_g)) = 2g-2+k-\sum_{i=1}^{k}\frac{1}{m_i} \ .
$$
\end{prop}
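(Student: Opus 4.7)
The plan is to reduce the calculation to the known case of a genuine surface group via a finite-index subgroup, and then to apply multiplicativity of $\ltb$. By Bundgaard--Nielsen~\cite{BN}, already invoked in Section~\ref{s:fiber}, the orbifold group $\Gamma:=\pi_1^{orb}(C_g)$ contains a torsion-free subgroup of finite index; passing to its normal core, I obtain a normal subgroup $H$ of $\Gamma$ of some index $n$ that is isomorphic to $\pi_1(\Sigma_h)$ for some surface genus $h\geq 1$. The quotient map corresponds to an unramified orbifold covering $\Sigma_h\to C_g$, and orbifold Euler characteristics are multiplicative in the degree of such coverings, so
\begin{equation*}
2h-2 \;=\; n\left(2g-2+k-\sum_{i=1}^{k}\frac{1}{m_i}\right).
\end{equation*}

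Next I would apply multiplicativity of $\ltb$ under finite-index subgroups~\cite{lueckl2}, which gives $\ltb(H)=n\cdot\ltb(\Gamma)$. Combining this with the standard value $\ltb(\pi_1(\Sigma_h))=2h-2$ (valid for all $h\geq 1$) and the displayed identity above immediately yields
$$
\ltb(\Gamma) \;=\; \frac{2h-2}{n} \;=\; 2g-2+k-\sum_{i=1}^{k}\frac{1}{m_i}.
$$
The same multiplicativity applies to the rational (Wall) Euler characteristic: $\chi(\Gamma)=\chi(H)/n=(2-2h)/n$ equals the negative of the right-hand side, which yields the asserted identity $\ltb(\Gamma)=-\chi(\Gamma)$.

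There is no real obstacle in this plan beyond the invocation of~\cite{BN}, which is already relied on elsewhere in the paper; the remainder is standard Riemann--Hurwitz bookkeeping together with the multiplicativity properties of $\ltb$ and of the rational Euler characteristic. Alternatively, the special case $k\leq 1$ follows directly from Proposition~\ref{t:DL} applied to the one-relator presentation~\eqref{eq:pres}, but the finite-index reduction above treats all $k$ uniformly and is cleaner.
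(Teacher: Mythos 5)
Your proposal is correct and follows essentially the same route as the paper: invoke Bundgaard--Nielsen to find a finite-index surface subgroup, then use multiplicativity of $\ltb$ and of the rational Euler characteristic under finite-index subgroups. The only difference is that you spell out the Riemann--Hurwitz bookkeeping that identifies the numerical value, which the paper leaves implicit.
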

\begin{proof}
By the solution of the Fenchel conjecture due to Bundgaard and Nielsen~\cite{BN}, $\pi_1^{orb}(C_g)$ has a surface 
group as a finite index subgroup. Since both $\ltb$ and $\chi$ are multiplicative under passage to finite index subgroups, 
the equality $\ltb = -\chi$ holds for $\pi_1^{orb}(C_g)$ because it holds for surface groups.
\end{proof}

The deepest theorem about $\ell^2$-Betti numbers that we use is the following vanishing result:
\begin{thm}{\rm (L\"uck~\cite{lueckMA}, Gaboriau~\cite{gaboriauIHES})}\label{t:LG}
If a finitely presentable $\Gamma$ fits into an exact sequence 
$$
1\longrightarrow K\longrightarrow \Gamma\longrightarrow Q\longrightarrow 1
$$
with $K$ and $Q$ infinite, and $K$ finitely generated, then $\ltb (\Gamma)=0$.
\end{thm}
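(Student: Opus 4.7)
The plan is to exploit the Lyndon--Hochschild--Serre spectral sequence for the extension with coefficients in the group von Neumann algebra, together with L\"uck's extended dimension function $\dim_{\mathcal{N}(\Gamma)}$ on arbitrary $\mathcal{N}(\Gamma)$-modules. Since $\ltb(\Gamma) = \dim_{\mathcal{N}(\Gamma)} H_1(\Gamma; \mathcal{N}(\Gamma))$, the goal reduces to showing that this dimension vanishes. The five-term exact sequence associated with the spectral sequence reads
$$
H_2(Q; \mathcal{N}(\Gamma)_K) \to H_0(Q; H_1(K; \mathcal{N}(\Gamma))) \to H_1(\Gamma; \mathcal{N}(\Gamma)) \to H_1(Q; \mathcal{N}(\Gamma)_K) \to 0,
$$
so by additivity of the L\"uck dimension on short exact sequences it is enough to verify that both terms flanking $H_1(\Gamma; \mathcal{N}(\Gamma))$ have zero von Neumann dimension.

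For the rightmost term, the key point is the infiniteness of $K$. I would first show that the coinvariants $\mathcal{N}(\Gamma)_K$ already have $\dim_{\mathcal{N}(\Gamma)}=0$, which is the refinement in L\"uck's dimension of the vanishing $\ltb_0(K)=0$ for infinite $K$, transported to $\mathcal{N}(\Gamma)$ via base change along $\mathcal{N}(K)\hookrightarrow\mathcal{N}(\Gamma)$. Because zero-dimensional modules form a Serre subcategory stable under the operations that compute $H_1(Q; -)$ from a free resolution, the vanishing propagates to $H_1(Q; \mathcal{N}(\Gamma)_K)$, and likewise to $H_2(Q; \mathcal{N}(\Gamma)_K)$.

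For the leftmost term, the key point is the finite generation of $K$. The induction formula yields $\dim_{\mathcal{N}(\Gamma)} H_1(K; \mathcal{N}(\Gamma)) = \ltb(K)$, which is finite since $K$ is finitely generated (with the number of generators as a crude upper bound). The module $H_1(K; \mathcal{N}(\Gamma))$ carries a natural $Q$-action, and the coinvariants under the infinite group $Q$ of such a finite-dimensional module have zero $\mathcal{N}(\Gamma)$-dimension, which is again a manifestation of $\ltb_0=0$ for infinite groups.

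The main obstacle lies precisely in the dimension-theoretic inputs: additivity of $\dim_{\mathcal{N}(\Gamma)}$ across the spectral sequence, the induction formula along $K \hookrightarrow \Gamma$, the behavior under base change, and the vanishing of $Q$-coinvariants of finite-dimensional modules under an infinite $Q$. None of these are elementary, and collectively they form a substantial part of L\"uck's extended dimension theory as developed in his monograph. An alternative route, due to Gaboriau, bypasses much of this by reinterpreting $\ell^2$-Betti numbers as orbit-equivalence invariants of measure-preserving actions, thereby reducing the vanishing to a measure-theoretic statement about a standard action of $\Gamma$ that factors through the quotient $Q$.
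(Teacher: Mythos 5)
The paper does not actually prove this theorem: it is imported as a black box from L\"uck and Gaboriau, with the author explicitly noting that L\"uck's version (which assumes $Q$ is not a torsion group) suffices for the application to the sequence~\eqref{eq:basic}. So there is no in-paper argument to compare against; what you have written is an outline of L\"uck's original proof. The skeleton is right: the five-term sequence of the homological Lyndon--Hochschild--Serre spectral sequence with coefficients in $\mathcal{N}(\Gamma)$, additivity of the extended dimension function, the induction formula giving $\dim_{\mathcal{N}(\Gamma)}\mathcal{N}(\Gamma)_K=b_0^{(2)}(K)=0$ for infinite $K$, the closure of zero-dimensional modules under direct sums and subquotients to kill $H_*(Q;\mathcal{N}(\Gamma)_K)$, and finiteness of $\dim_{\mathcal{N}(\Gamma)}H_1(K;\mathcal{N}(\Gamma))=\ltb(K)$ from the finite generation of $K$.

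The gap is in the sentence asserting that the $Q$-coinvariants of a finite-dimensional module have dimension zero as ``again a manifestation of $\ltb_0=0$ for infinite groups.'' It is not: $H_1(K;\mathcal{N}(\Gamma))$ with its $Q$-action is not an induced or coinduced $\mathbb{Z}Q$-module, so nothing about $b_0^{(2)}(Q)$ can be quoted here, and this single step is the crux of the entire theorem. L\"uck proves it only when $Q$ contains an element of infinite order, by restricting the action to a copy of $\mathbb{Z}$ generated by such an element and running a dimension-exhaustion argument on $M/(t-1)M$ for finite-dimensional $M$; removing that hypothesis is precisely Gaboriau's contribution, via the passage to measured equivalence relations that you relegate to an ``alternative route.'' As written, your argument establishes the theorem for $Q$ not a torsion group --- which happens to be all this paper needs, since its $Q$ is an orbifold surface group --- but the statement as quoted, for arbitrary infinite $Q$, cannot be closed by the dimension-theoretic toolkit you list; it genuinely requires the second reference.
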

This was originally proved by L\"uck~\cite{lueckMA} under the assumption that $Q$ is not a torsion group, and then
by Gaboriau~\cite{gaboriauIHES} in the general case. We apply Theorem~\ref{t:LG} to the exact sequence~\eqref{eq:basic}, where
L\"uck's technical assumption is satisfied, so that the full strength of Gaboriau's result is not required for our proofs.
A quick proof of L\"uck's result is given in~\cite{BMV}.

We will use Theorem~\ref{t:LG} to show that in certain situations the kernel $K$ in~\eqref{eq:basic} is finite. It is then
interesting to bound the order of $K$, which one can do with the following consequence of the Lyndon--Hochschild--Serre spectral 
sequence in the $\ell^2$ setting:
\begin{lem}\label{l:ss}
Suppose the group $\Gamma$ fits into an exact sequence 
$$
1\longrightarrow K\longrightarrow \Gamma\longrightarrow Q\longrightarrow 1
$$
with $K$ finite. Then $\ltb (\Gamma)=\frac{1}{\vert K\vert}\ltb (Q)$.
\end{lem}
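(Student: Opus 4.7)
The plan is to apply the Lyndon--Hochschild--Serre spectral sequence in the $\ell^2$ setting, as developed in L\"uck's monograph~\cite{lueckl2}. For the extension $1\to K\to \Gamma\to Q\to 1$ this takes the form
\begin{equation*}
E_2^{p,q}=H^p(Q;H^q_{(2)}(K))\ \Longrightarrow\ H^{p+q}_{(2)}(\Gamma),
\end{equation*}
with the usual interpretation of the $E_2$-page and of the limit in terms of von Neumann dimensions over $\mathcal{N}(Q)$ and $\mathcal{N}(\Gamma)$, respectively.

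The first step is to compute the $\ell^2$-Betti numbers of the fiber $K$. Because $K$ is finite, one has $\beta_0^{(2)}(K)=\frac{1}{|K|}$ and $\beta_q^{(2)}(K)=0$ for every $q\geq 1$. Consequently, the only row of the $E_2$-page that carries nonzero von Neumann dimension is $q=0$, so the spectral sequence degenerates at $E_2$ with no extension problems to worry about in terms of dimensions.

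The second step is to identify the surviving row. Since $K$ acts trivially on $H^0_{(2)}(K)$, the term $E_2^{p,0}$ computes the $\ell^2$-cohomology of $Q$ with coefficients in the trivial module of $\ell^2$-dimension $\frac{1}{|K|}$; by additivity and scaling of the von Neumann dimension, its dimension equals $\frac{1}{|K|}\,\beta_p^{(2)}(Q)$. Combining this with the collapse established in the first step gives
\begin{equation*}
\beta_p^{(2)}(\Gamma)=\frac{1}{|K|}\,\beta_p^{(2)}(Q)
\end{equation*}
for every $p$, and specializing to $p=1$ yields the lemma.

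The only delicate point is the bookkeeping of von Neumann dimensions in the spectral sequence, in particular verifying that multiplication by the scalar $\frac{1}{|K|}$ really comes out of the induction/restriction machinery used to pass from $\mathcal{N}(Q)$-modules to $\mathcal{N}(\Gamma)$-modules; this is the step I expect to be the main (essentially bookkeeping) obstacle. However, this is precisely what the $\ell^2$ LHS spectral sequence in~\cite{lueckl2} is designed to do, so the calculation amounts to invoking the relevant statement there and observing that for finite $K$ it reduces to the elementary fact above.
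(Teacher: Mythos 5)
The paper gives no proof of this lemma beyond describing it as ``the following consequence of the Lyndon--Hochschild--Serre spectral sequence in the $\ell^2$ setting,'' and your argument carries out exactly that: the finiteness of $K$ forces $\beta_0^{(2)}(K)=1/|K|$ and the vanishing of the higher terms, so the spectral sequence collapses to the bottom row and the dimensions scale by $1/|K|$. Your proposal is correct and is essentially the paper's (implicit) proof, with the acknowledged bookkeeping of von Neumann dimensions being exactly what the cited machinery in L\"uck's monograph handles.
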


\section{A direct approach to Theorem~\ref{t:main1}}\label{s:onerel}

As remarked in Section~\ref{s:fiber}, all the one-relator orbifold groups of the form~\eqref{eq:pres} are fundamental groups 
of smooth complex projective varieties. So one only has to prove the converse.

Let $\Gamma$ be an infinite one-relator group, and $X$ a compact K\"ahler manifold with $\pi_1(X)=\Gamma$.
Since the first Betti number of $\Gamma$ is positive, $X$ has a non-constant Albanese map $\alpha_X$.
The dimension of the Albanese image is bounded above by the cup-length of degree one cohomology
classes on $X$, see Catanese~\cite{Cat} and~\cite[Ch.~2]{ABCKT}. By the Hard Lefschetz theorem, this 
cup-length is at least two, and since $b_2(\Gamma)\leq 1$ because there is only one relation, we conclude that
the cup-length is exactly two. Therefore, the Albanese image of $X$ has real dimension equal to $2$. It is a 
complex curve $C_g$, necessarily of genus $g\geq 1$.
It is well known, and easy to see, that a one-dimensional Albanese image must be smooth, and that the fibers
of the Albanese map are connected in this case. Thus 
$$
\alpha_X\colon X\longrightarrow C_g
$$
is a surjective holomorphic map with connected fibers onto a smooth curve, which induces an isomorphism on 
$H^1(-;\Z)$ and an injection on $H^2(-;\Z)$. Since the target of the Albanese map is aspherical, $\alpha_X$ factors 
through the classifying map 
$$
c_X\colon X\longrightarrow B\Gamma
$$
of the universal covering of $X$, showing that the rank of $H^2(\Gamma;\Z)$ is positive. This means that the defining 
relation of $\Gamma$ is in the commutator subgroup of the free group on the generators. Thus there are 
precisely $2g$ generators.

Now we apply Lemma~\ref{l:C} to $\alpha_X$. Let $k$ be the number of critical values with multiplicity $>1$. 
If $k>0$, then the minimal number of generators of $\pi_1^{orb}(C_g)$ is $2g+k-1$, see~\cite{PRZ}. But 
$\pi_1^{orb}(C_g)$ is a homomorphic image of $\Gamma$, which is generated by $2g$ elements. Therefore 
we conclude that there is at most one critical value $p$ with a multiplicity $m>1$
and the proof of Theorem~\ref{t:main1} will be complete once one shows that the kernel $K$ in Lemma~\ref{l:C} is trivial in this case.

By Proposition~\ref{t:DL} the first $\ell^2$-Betti number of a one-relator group $\Gamma$ on $2g$ generators is
\begin{equation}\label{eq:L2onerel}
\ltb (\Gamma) = 2g-1-\frac{1}{n} \ ,
\end{equation}
where $n$ is the maximal positive integer such that the defining relator can be written as an $n^{th}$ power.
Thus $\ltb (\Gamma)>0$ as soon as $g\geq 2$, or $g=1$ and $n>1$. We can then apply Theorem~\ref{t:LG}
to conclude that $K$ in~\eqref{eq:basic} is finite. Here it is crucial to have a finitely generated $K$.

We can estimate the order of the finite group $K$ using Lemma~\ref{l:ss} and~\eqref{eq:L2onerel}:
\begin{equation}\label{eq:estimate}
\vert K\vert = \frac{\ltb (\pi_1^{orb}(C_g))}{\ltb (\Gamma )} =\frac{2g-1-\frac{1}{m}}{2g-1-\frac{1}{n}}
< \frac{2g-1}{2g-1-\frac{1}{n}} = 1 + \frac{1}{(2g-1)n-1}\leq 2
\end{equation}
if $g\geq 2$, and if $g=1$ and $n>1$. Therefore, in all these cases, the proof is complete.

Finally, consider the case $g=n=1$. The condition $n=1$ implies that $\Gamma$ is torsion-free, so that $K$
would have to be infinite if it were non-trivial. The condition $g=1$ means that in this case the Albanese map 
is onto an elliptic curve. If there are no 
multiple fibers, then $\pi_1^{orb}(C_1)=\Z^2$, and the map $\Gamma\longrightarrow\Z^2$ induced by the 
Albanese is the Abelianization. Then the kernel $K$ is the commutator subgroup of $\Gamma$, and if it is 
finitely generated, then it is trivial by a result of Bieri~\cite[Corollary~B(b)]{Bieri2}.

\section{The main results}

\subsection{Proof of Theorem~\ref{t:master}}\label{s:main}

We now prove the following precise version of Theorem~\ref{t:master}.
\begin{thm}\label{t:Master}
A group $\Gamma$ of deficiency $\geq 2$ is the fundamental group of a compact K\"ahler manifold $X$ if and
only if it is isomorphic to $\pi_1^{orb}(C_g)$ for some $g\geq 2$. 

The curve $C_g$ is the Albanese image of $X$ and the isomorphism is induced by the Albanese map.
The number $k$ of multiple fibers of the Albanese map, equivalently the number of orbifold points with multiplicity $>1$
in $C_g$, is $0$ or $1$ if $\Gamma$ is a one-relator group, and equals the minimal number of relations in any 
finite presentation of $\Gamma$ in all other cases.
\end{thm}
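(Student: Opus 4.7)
The forward direction is proved in Section~\ref{s:fiber}. For the converse, let $\Gamma$ be a K\"ahler group with $\DEF(\Gamma)\geq 2$, realised by a compact K\"ahler manifold $X$.

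The overall plan mirrors Section~\ref{s:onerel}. First, Proposition~\ref{p:hill} gives $\ltb(\Gamma)\geq \DEF(\Gamma)-1\geq 1>0$, so the Albanese $\alpha_X$ is non-constant and Theorem~\ref{t:fewrel} forces its image to be a smooth curve $C_g$. Lemma~\ref{l:C} then produces the exact sequence
\[
1\lra K\lra \Gamma\lra \pi_1^{orb}(C_g)\lra 1
\]
with $K$ finitely generated, and Theorem~\ref{t:LG} together with the infinitude of $\pi_1^{orb}(C_g)$ forces $K$ to be finite. Since $K$ is finite, the Lyndon--Hochschild--Serre spectral sequence with $\Q$-coefficients collapses to an isomorphism $H^*(\Gamma;\Q)\cong H^*(\pi_1^{orb}(C_g);\Q)$, giving $b_1(\Gamma)=2g$ and $b_2(\Gamma)=1$; the Morse inequality \eqref{eq:MM} then yields $\DEF(\Gamma)\leq 2g-1$, and hence $g\geq 2$.

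The main obstacle will be to show $|K|=1$. Lacking the explicit Dicks--Linnell formula used in Section~\ref{s:onerel}, I would replace it by the integral sharpening
\[
\DEF(\Gamma)\leq b_1(\Gamma)-d\bigl(H_2(\Gamma;\Z)\bigr),
\]
where $d(-)$ is the minimum number of generators of an abelian group; this follows because in any presentation 2-complex the cellular $H_2$ is free abelian of rank $b-a+b_1(\Gamma)$ and surjects onto $H_2(\Gamma;\Z)$. Assume $|K|\geq 2$, pick a prime $p$ dividing $|K|$, and run the Lyndon--Hochschild--Serre spectral sequence for $1\to K\to \Gamma\to\pi_1^{orb}(C_g)\to 1$ with $\mathbb F_p$-coefficients. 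The term $E_2^{1,1}=H_1(\pi_1^{orb}(C_g);H_1(K;\mathbb F_p))$ has $\mathbb F_p$-dimension at least $2g$, and when $p$ is coprime to every multiplicity $m_i$ we have $H_3(\pi_1^{orb}(C_g);\mathbb F_p)=0$ so that $E_2^{1,1}$ survives to $E_\infty^{1,1}$, forcing $\dim_{\mathbb F_p}H_2(\Gamma;\mathbb F_p)\geq 2g$. Since $\dim_{\mathbb F_p} H_1(\Gamma;\mathbb F_p)\leq 2g+1$, the $\mathbb F_p$-analogue of the refined Morse inequality then forces $\DEF(\Gamma)\leq 1$, contradicting $\DEF(\Gamma)\geq 2$. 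The delicate case occurs when every prime factor of $|K|$ also divides some $m_i$; there $H_*(\pi_1^{orb}(C_g);\mathbb F_p)$ is itself infinite-dimensional in each degree and the spectral sequence must be analysed more carefully, perhaps by passing to a torsion-free finite-index subgroup of $\pi_1^{orb}(C_g)$ and the corresponding finite cover of $X$.

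With $|K|=1$ the Albanese induces the isomorphism $\Gamma\cong\pi_1^{orb}(C_g)$, and Lemma~\ref{l:def} gives $\DEF(\Gamma)=2g-1$. By~\cite{PRZ} the minimum number of generators of $\pi_1^{orb}(C_g)$ is $2g+k-1$ when $k\geq 1$ and $2g$ when $k=0$, so the minimum number of relations in any finite presentation equals $k$ (for $k\geq 1$) or $1$ (for $k=0$). Consequently $\Gamma$ is a one-relator group precisely when $k\in\{0,1\}$, and in all other cases $k$ equals the minimum number of relations in any finite presentation of $\Gamma$, as required.
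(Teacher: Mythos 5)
Your opening moves coincide with the paper's: Albanese image a curve (the paper re-proves Theorem~\ref{t:fewrel} via Catanese's isotropic-subspace argument, but citing it is legitimate), Lemma~\ref{l:C} to get the extension with finitely generated kernel $K$, Proposition~\ref{p:hill} plus Theorem~\ref{t:LG} to make $K$ finite, and the count of relations at the end via~\cite{PRZ} is essentially the paper's \eqref{eq:kbd}. The genuine gap is in the step you yourself flag as "the main obstacle": showing $|K|=1$. Your spectral sequence argument is incomplete in two ways. First, you explicitly leave open the case where every prime factor of $|K|$ divides some multiplicity $m_i$. Second, even in the case you do treat, the term $E_2^{1,1}=H_1(\pi_1^{orb}(C_g);H_1(K;\mathbb{F}_p))$ need not have dimension $\geq 2g$: for a finite group $K$, $H_1(K;\mathbb{F}_p)=K^{ab}\otimes\mathbb{F}_p$ can vanish for every prime $p$ dividing $|K|$ (e.g.\ $K$ perfect), in which case $E_2^{1,1}=0$ and no lower bound on $\dim H_2(\Gamma;\mathbb{F}_p)$ results. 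So the argument as written does not rule out a non-trivial $K$.

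The paper's resolution (Lemma~\ref{l:coho}) sidesteps both problems by changing the group before running the mod~$p$ Morse inequality. Take $a\in K$ of prime order $p$ and pass to its centralizer $\bar\Gamma=C_\Gamma(a)$, which has finite index in $\Gamma$ and contains the \emph{central} cyclic subgroup $C=\langle a\rangle$ — centrality guarantees the relevant homology of the kernel does not vanish, unlike for a general finite $K$. The quotient $\Delta$ may be arranged to be an honest surface group~\cite{BN}, the central extension is classified by a class in $H^2(\Delta;\Z_p)=\Z_p$ which dies on an index-$p$ subgroup, so $\bar\Gamma$ contains a finite-index subgroup $C\times\bar\Delta$; the Morse inequality with $\Z_p$-coefficients and K\"unneth then give $\DEF(C\times\bar\Delta)\leq -1$. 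The other ingredient you are missing is what transports this contradiction back to $\Gamma$: by Reidemeister--Schreier, a finite-index subgroup of index $d$ in a group with an $(a,b)$-presentation has deficiency $\geq (a-b-1)d+1$, so $\DEF(\Gamma)\geq 2$ forces every finite-index subgroup to have deficiency $\geq 2$ as well — incompatible with the negative deficiency just produced. Your closing suggestion of "passing to a torsion-free finite-index subgroup of $\pi_1^{orb}(C_g)$ and the corresponding finite cover" points in this direction, but without the centralizer construction and the finite-index monotonicity of positive deficiency the argument does not close.
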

\begin{proof}
We noted in Section~\ref{s:fiber} that all orbifold fundamental groups $\pi_1^{orb}(C_g)$ are in fact fundamental 
groups of smooth complex projective varieties, and that they have deficiency $\DEF (\pi_1^{orb}(C_g))=2g-1\geq 2$
if and only if $g\geq 2$. 

For the non-trivial direction of the theorem, let $X$ be a compact K\"ahler manifold with $\pi_1(X)=\Gamma$ of 
deficiency $\geq 2$.
The first step is to prove that the Albanese image is indeed a curve. Unlike for one-relator groups, this is not
obvious, and was the content of the original result of Green--Lazarsfeld and Gromov, cf.~Theorem~\ref{t:fewrel}.
For the sake of completeness we give a quick proof following Catanese~\cite{CatFew}. Using $\DEF (\Gamma)\geq 2$,
the Morse inequality~\eqref{eq:MM}
shows that every element of $H^1(\Gamma;\C)$ is contained in an isotropic subspace of dimension $\geq 2$ for the 
cup product $H^1(\Gamma;\C)\times H^1(\Gamma;\C)\longrightarrow H^2(\Gamma;\C)$. Therefore, by Catanese's proof
of the Siu--Beauville theorem, see~\cite{Cat} and~\cite[Ch.~2]{ABCKT}, every element of $H^1(\Gamma;\C)$ is 
in the image of a pullback $f^*$, where $f\colon X\longrightarrow C$ is a holomorphic map with connected fibers
to a curve of genus $\geq 2$. Since each of these fibrations is uniquely determined by the map it induces on integral 
cohomology, there are at most countably many of them. But $H^1(\Gamma;\C)$ cannot equal a union of 
countably many proper subspaces, and so at least one of these subspaces equals $H^1(\Gamma;\C)$. Therefore,
there is a holomorphic fibration of $X$ over a curve which induces an isomorphism on $H^1$. It follows by the 
universal property of the Albanese map that it factors through this fibration.

Since the Albanese map of $X$ has one-dimensional image, by Lemma~\ref{l:C} it induces an exact sequence
\begin{equation}\label{eq:KK}
1\longrightarrow K\longrightarrow \Gamma\longrightarrow \pi_1^{orb}(C_g)\longrightarrow 1 \ ,
\end{equation}
with $C$ of genus $g=\frac{1}{2}b_1(\Gamma)$, and $K$ finitely generated. 
By Proposition~\ref{p:hill}, $\ltb (\Gamma)\geq \DEF (\Gamma) -1\geq 1$, and so Theorem~\ref{t:LG} implies 
that $K$ is finite. It is this application of Theorem~\ref{t:LG} that requires the a priori knowledge that $K$ is 
finitely generated, which we arranged by considering the Albanese image as an orbifold.

If $k$ is the number of multiple fibers, then by~\cite{PRZ} the orbifold fundamental group $\pi_1^{orb}(C_g)$ is 
generated by no fewer than $2g+k-1$ elements. As $\Gamma$ surjects to $\pi_1^{orb}(C_g)$, we conclude that 
\begin{equation}\label{eq:kbd}
k\leq a-b_1(\Gamma)+1\leq b
\end{equation} 
if $\Gamma$ has a presentation with $a$ generators and $b$ relations. The second inequality follows from~\eqref{eq:MM}
since $b_2(\Gamma)\geq 1$. One can use~\eqref{eq:kbd} to bound the order of the finite group $K$, but, 
unlike in~\eqref{eq:estimate}, this bound is not good enough to show that $K$ is trivial, because
we do not have enough control over $\ltb (\Gamma )$. We therefore resort to an argument using group cohomology 
with mod $p$ coefficients to bound the deficiency of a group in order to rule out a non-trivial finite $K$ in~\eqref{eq:KK}.

Suppose that $\bar\Gamma\subset\Gamma$ is a subgroup of finite index $d$. If $\G$ has a presentation with $a$ generators
and $b$ relations, then, by the Reidemeister--Schreier process, $\bar\Gamma$ has a presentation with $\bar a = (a-1)d+1$
generators and $\bar b = bd$ relations. Therefore
$$
\DEF (\bar\Gamma )\geq \bar a-\bar b = (a-b-1)d+1 \ .
$$
This shows that the properties of having deficiency $\geq 2$, or of having positive deficiency, are preserved under passage 
to finite index subgroups. Therefore, the triviality of the kernel $K$ in~\eqref{eq:KK} follows from the following:
\begin{lem}{\rm (cf.~\cite[Thm.~1]{hill2})}\label{l:coho}
If the finite subgroup $K$ in~\eqref{eq:KK} is non-trivial, then $\Gamma$ has a finite index subgroup of negative deficiency.
\end{lem}
\begin{proof}
Suppose $K$ is non-trivial. Let $a\in K$ be an element of prime order $p$, and $C\subset K$ the cyclic subgroup generated by $a$.
The centralizer $\bar\Gamma = C_{\Gamma}(a)$ has finite index in $\Gamma$, and fits into a central extension
$$
1\longrightarrow C\longrightarrow \bar\Gamma\longrightarrow\Delta \longrightarrow 1 \ ,
$$
where $\Delta$ is a finite index subgroup of $\pi_1^{orb}(C_g)$. (For simplicity we may arrange that $\Delta$ is an ordinary 
surface group of positive genus~\cite{BN}.) This central extension corresponds to an Euler class $e\in H^2(\Delta;\Z_p)=\Z_p$. 
If we pull back to a subgroup $\bar\Delta\subset\Delta$ of index $p$, then $e$ is multiplied by $p$, and so becomes zero, so 
that the pulled-back extension splits. So $\bar\Gamma$ has a finite index subgroup isomorphic to $C\times\bar\Delta$, with 
$\bar\Delta$ a surface group. Now by the Morse inequality for cohomology with $\Z_p$-coefficients and the K\"unneth formula, 
we have 
$$
\DEF (C\times\bar\Delta )\leq \dim_{\Z_p} H^1(C\times\bar\Delta;\Z_p)-\dim_{\Z_p} H^2(C\times\bar\Delta;\Z_p)=-1 \ .
$$
This completes the proof of the Lemma.
\end{proof}
We have now proved that $\Gamma$ is isomorphic to $\pi_1^{orb}(C_g)$ via the homomorphism induced by the Albanese map.
Moreover, by~\eqref{eq:kbd}, the minimal number of relations in any finite presentation of $\Gamma$ is bounded from below by 
the number of multiple fibers of the Albanese map. If $\Gamma$ has a presentation with one relation, then there is at most one 
multiple fiber (but there may be none). For a two-relator group $\Gamma$ which is not isomorphic to a one-relator group we have 
at most two multiple fibers. If there were strictly fewer, then $\Gamma$ would in fact have a presentation with one relation only,
compare Section~\ref{s:fiber}. 
Therefore, there are exactly two multiple fibers in this case. Proceeding by induction, we see that for all larger values of the 
minimal number of relations in a presentation of $\Gamma$, this number equals the number of multiple fibers of the Albanese.
This completes the proof of Theorem~\ref{t:Master}.
\end{proof}

\begin{rem}
It would be interesting to extend Theorem~\ref{t:Master} to groups with $\DEF (\Gamma )=1$. 
The weaker result of Green--Lazarsfeld and Gromov, cf.~Theorem~\ref{t:fewrel}, does indeed extend:
\begin{prop}\label{p:pdefone}
If the fundamental group $\Gamma$ of a compact K\"ahler manifold $X$ has $\DEF (\Gamma )= 1$, then the Albanese image of $X$
is a curve.
\end{prop}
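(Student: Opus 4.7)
The plan is to follow the strategy of the proof of Theorem~\ref{t:Master}, combining the Morse inequality for deficiency with Catanese's version of the Siu--Beauville isotropic subspace theorem. Set $V = H^1(\Gamma;\C) \cong H^1(X;\C)$ and $2g = b_1(\Gamma)$, which is even because $X$ is K\"ahler. The Morse inequality~\eqref{eq:MM} now gives $b_2(\Gamma) \leq 2g-1$, one short of the bound $b_2(\Gamma)\leq 2g-2$ from which the proof of Theorem~\ref{t:Master} deduces that every $v\in V$ lies in a $2$-dimensional isotropic subspace for the cup product $V\wedge V\to H^2(\Gamma;\C)$. Closing this gap of one is the main new difficulty.

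If $b_2(\Gamma)\leq 2g-2$, the map $\cup v\colon V\to H^2(\Gamma;\C)$ has kernel of dimension at least $2$ for every $v\in V$, and the argument of Theorem~\ref{t:Master} carries over verbatim to produce the Albanese image as a curve. The substantive content is therefore the equality case $b_2(\Gamma)=2g-1$, which I would split according to $\ltb(\Gamma)$. If $\ltb(\Gamma)>0$, I would invoke Gromov's theorem to produce a surjective holomorphic map with connected fibers $f\colon X\to C$ onto a curve of genus $\geq 2$, and then argue, via universality of the Albanese together with a comparison of first Betti numbers, that the genus of $C$ must equal $g$, so that the Albanese image is exactly $C$. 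If instead $\ltb(\Gamma)=0$, then by the equality case of Proposition~\ref{p:hill} the deficiency-$1$ presentation complex is aspherical, so $\mathrm{cd}(\Gamma)\leq 2$, $\chi(\Gamma)=0$, and every $\ell^2$-Betti number of $\Gamma$ vanishes.

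The main obstacle is this last aspherical subcase, in which the coarse numerical bound is inadequate and one must exploit the Hodge structure of the cup product. I would examine the image $W\subseteq H^2(X;\C)$ of $H^2(\Gamma;\C)\to H^2(X;\C)$, which is a sub-Hodge-structure $W=W^{2,0}\oplus W^{1,1}\oplus W^{0,2}$ of dimension at most $2g-1$ containing every cup product of classes from $V$. Two boundary cases are immediate: if $W^{2,0}=0$, then any two holomorphic $1$-forms on $X$ wedge to zero in $H^{2,0}(X)$, and Siu--Beauville finishes the proof; if $\dim W^{2,0}\leq g-2$, then the kernel of $\omega\wedge\cdot\colon H^{1,0}(X)\to W^{2,0}$ has dimension at least $2$ for every $\omega$, and Catanese's argument goes through. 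The remaining exceptional value $\dim W^{2,0}=g-1$ is what I expect to be the hardest step: here the counting constraint $2\dim W^{2,0}+\dim W^{1,1}\leq 2g-1$ forces $\dim W^{1,1}\leq 1$, and I would attempt to use the K\"ahler Hodge--Riemann relations (which force the Hermitian pairing $\omega\otimes\bar\eta\mapsto\omega\wedge\bar\eta$ on $H^{1,0}(X)$ to be positive definite of rank $g$) together with the $\ell^2$-vanishing available in the aspherical subcase to exclude this configuration whenever $g\geq 2$, reducing to $g=1$, where the Albanese image is automatically one-dimensional.
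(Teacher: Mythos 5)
Your overall strategy --- the Morse inequality~\eqref{eq:MM} plus Catanese's isotropic subspace argument --- is the right starting point, and your easy case $b_2(\Gamma)\leq 2g-2$ is handled correctly. But the hard case $b_2(\Gamma)=2g-1$ is where your proposal has genuine gaps, and the paper closes it with a much simpler observation that you miss. The paper's dichotomy is not on the value of $b_2(\Gamma)$ but on whether there exists $\alpha\in H^1(\Gamma;\C)$ not contained in any $2$-dimensional isotropic subspace. If such an $\alpha$ exists, then $\cup\,\alpha\colon H^1(\Gamma;\C)\to H^2(\Gamma;\C)$ has one-dimensional kernel, hence rank $2g-1\geq b_2(\Gamma)$, hence is \emph{surjective}; so every class in $H^2(\Gamma;\C)$ is of the form $\alpha\cup\beta$, the cup product $H^2(\Gamma;\C)\times H^2(\Gamma;\C)\to H^4(\Gamma;\C)$ vanishes identically because $\alpha\cup\alpha=0$, and the cup-length of degree-one classes is less than four, which bounds the real dimension of the Albanese image by three and forces it to be a curve. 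No $\ell^2$-theory, no asphericity, and no analysis of $W^{2,0}$ is needed.

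Both branches of your case $b_2(\Gamma)=2g-1$ have holes. In the branch $\ltb(\Gamma)>0$, producing a holomorphic fibration $f\colon X\to C$ with connected fibers onto a curve of genus $\geq 2$ does not by itself show that the Albanese image is a curve: that requires $f^*$ to be an isomorphism on $H^1$, and the ``comparison of first Betti numbers'' is exactly the point at issue, not a consequence of universality (for $X=C\times A$ with $A$ a torus one has such a fibration but a higher-dimensional Albanese image; the deficiency hypothesis must be what rules this out, and you do not say how). In the branch $\ltb(\Gamma)=0$ your final configuration $\dim W^{2,0}=g-1$, $\dim W^{1,1}\leq 1$ is explicitly left open, and the tool you propose cannot close it: the Hodge--Riemann relations make $(\omega,\eta)\mapsto\int_X\omega\wedge\bar\eta\wedge\kappa^{n-1}$ a definite Hermitian form of rank $g$, but a rank-$g$ definite sesquilinear form can perfectly well take values in a one-dimensional space, so there is no contradiction with $\dim W^{1,1}=1$. (A smaller point: the image of $H^2(\Gamma;\C)\to H^2(X;\C)$ is not obviously a sub-Hodge structure; you should work with the image of $\Lambda^2 H^1(X;\C)$, which is, and which is all you need.) As it stands the proposal does not prove the proposition.
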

\begin{proof}
The assumption $\DEF (\Gamma )=1$ still gives $b_1(\Gamma )>0$, and so $X$ has a non-constant Albanese map.
In this case the Morse inequality~\eqref{eq:MM} gives 
\begin{equation}\label{eq:Mdefone}
b_2(\Gamma )\leq b_1(\Gamma )-1 \ .
\end{equation} 
If every element of $H^1(\Gamma;\C)$ is contained in an isotropic subspace of dimension $\geq 2$ for the 
cup product $H^1(\Gamma;\C)\times H^1(\Gamma;\C)\longrightarrow H^2(\Gamma;\C)$, then the conclusion
follows from Catanese's argument~\cite{CatFew} used in the proof of Theorem~\ref{t:Master} above.
If there is an element $\alpha\in H^1(\Gamma;\C)$ not contained in such an isotropic subspace 
then the cup-product $\cup\alpha\colon H^1(\Gamma;\C)\longrightarrow H^2(\Gamma;\C)$ has only a one-dimensional
kernel. It is then surjective and~\eqref{eq:Mdefone} is an equality. But then every class in $H^2(\Gamma;\C)$ is a 
multiple of $\alpha$, and so the cup product $H^2(\Gamma;\C)\times H^2(\Gamma;\C)\longrightarrow H^4(\Gamma;\C)$ 
vanishes identically. Thus the cup-length of one-forms is less than four, and so the Albanese image is a curve after all, 
cf.~\cite{Cat} and~\cite[Ch.~2]{ABCKT}. 
\end{proof}
Thus we obtain the exact sequence~\eqref{eq:basic} with a finitely generated kernel $K$. Using $\DEF (\Gamma )>0$, 
we can rule out a non-trivial finite kernel $K$ as in the proof of Theorem~\ref{t:Master} 
using Lemma~\ref{l:coho}. If $\ltb (\Gamma )>0$, then Theorem~\ref{t:LG} rules out an infinite $K$, showing that
$\Gamma$ is isomorphic to the orbifold fundamental group of the Albanese image. 
However, it is now possible that $\ltb (\Gamma )=\DEF (\Gamma)-1=0$.
This is the case of equality in Proposition~\ref{p:hill}, and so $\Gamma$ would have an aspherical presentation 
complex. This shows that $\Gamma$ would be torsion-free of cohomological dimension two, and the subgroup
$K$ would be of cohomological dimension $\leq 2$. One can actually rule out a $K$ of cohomological dimension one,
so that in this case the desired generalization of Theorem~\ref{t:Master} follows.

Unfortunately, I do not know how to obtain the conclusion that $K$ is of cohomological dimension one in all cases.
It can be obtained from a result of Bieri~\cite[Thm.~5.5]{Bieri1} if one knows that $K$ is finitely presentable, or at least 
of type $FP_2$. Therefore, adding, for example, the assumption that the fundamental group is coherent, one can
circumvent this problem. There are other assumptions one can add, which also produce the desired conclusion,
compare the very recent preprint by Biswas and Mj~\cite{BM2}. However, I believe that none of these additional 
assumptions are necessary.

In all cases in which the kernel $K$ is trivial, the Albanese image in Proposition~\ref{p:pdefone} is an elliptic curve 
by Lemma~\ref{l:def}.
\end{rem}

\subsection{Limit groups}\label{s:limit}

The class of limit groups was introduced by Sela~\cite{Sela}, and has turned out to be the same as the class of fully 
residually free groups, see~\cite[Cor.~3.10]{CG}. A group $\Gamma$ is fully residually free if for every finite set 
$S\subset\Gamma$ there is a homomorphism $\varphi\colon\Gamma\longrightarrow F_n$ to a free group which is 
injective on $S$. As soon as $\Gamma$ is non-Abelian, this means that $\Gamma$ surjects onto a non-Abelian free group.

It was shown by Pichot~\cite{Pi} that non-Abelian limit groups $\Gamma$ have $\ltb (\Gamma)\geq 1$.
This allows us to use the same arguments as in the proof of Theorem~\ref{t:master} to conclude the following:
\begin{thm}\label{t:limit}
A non-Abelian limit group $\Gamma$ is a K\"ahler group if and only if it is isomorphic to a surface 
group of genus $\geq 2$.
\end{thm}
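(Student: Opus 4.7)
The plan is to follow the three-step strategy of the proof of Theorem~\ref{t:Master}, with Pichot's estimate $\ltb(\Gamma)\geq 1$ for non-Abelian limit groups taking the place of the Morse inequality bound $\ltb(\Gamma)\geq\DEF(\Gamma)-1$ used in the deficiency-$\geq 2$ setting. The easy direction is immediate: genus-$g$ surface groups with $g\geq 2$ are K\"ahler (as fundamental groups of smooth Riemann surfaces) and are fully residually free, hence limit groups.

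For the converse, suppose $X$ is a compact K\"ahler manifold with $\pi_1(X)=\Gamma$ a non-Abelian limit group. First I would invoke the characterization recalled just above the statement of Theorem~\ref{t:limit} to produce a surjection $f\colon\Gamma\longrightarrow F_n$ onto a non-Abelian free group. Since $H^2(F_n;\C)=0$, the pullback $V=f^*H^1(F_n;\C)$ is an isotropic subspace of $H^1(\Gamma;\C)$ for the cup product, and via the classifying map $X\to B\Gamma$ it determines a cup-isotropic subspace of $H^1(X;\C)$ of dimension $n\geq 2$. By Catanese's version of the Siu--Beauville theorem (cf.~\cite{Cat} and~\cite[Ch.~2]{ABCKT}), this isotropic subspace arises as the pullback under a surjective holomorphic map with connected fibers $h\colon X\longrightarrow C_g$ onto a curve of genus $g\geq 2$.

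Applying Lemma~\ref{l:C} to $h$ would then produce an exact sequence
$$
1\longrightarrow K\longrightarrow\Gamma\longrightarrow\pi_1^{orb}(C_g)\longrightarrow 1
$$
with $K$ finitely generated. Pichot's theorem gives $\ltb(\Gamma)\geq 1$, and since $\pi_1^{orb}(C_g)$ is infinite (it contains a surface group of genus $\geq 2$ as a finite-index subgroup), Theorem~\ref{t:LG} forces $K$ to be finite. To conclude I would appeal to the well-known torsion-freeness of limit groups: a non-trivial torsion element would map to the identity in every free group, contradicting residual freeness on a finite set containing both the element and the identity. This forces $K=\{1\}$, and the same torsion-freeness rules out any orbifold structure on $C_g$, since in the presentation~\eqref{eq:presgen} the generators $z_i$ would contribute torsion of order $m_i\geq 2$ to $\Gamma$. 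Hence $\Gamma\cong\pi_1(C_g)$, a surface group of genus $g\geq 2$.

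The main obstacle I anticipate is the second step, extracting an honest curve fibration of $X$ from the cohomological isotropy condition; this is Catanese's theorem, a non-trivial but standard input. The rest is essentially a transcription of the proof of Theorem~\ref{t:Master}, with Pichot's $\ell^2$-estimate replacing the Morse bound and torsion-freeness of limit groups bypassing the mod-$p$ cohomology arguments of Lemma~\ref{l:coho}.
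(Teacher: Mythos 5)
Your proposal is correct and follows essentially the same route as the paper: Siu--Beauville applied to the surjection onto a non-Abelian free group to get the fibration over a genus~$\geq 2$ curve, Lemma~\ref{l:C} to get the exact sequence with finitely generated kernel, Pichot's bound $\ltb(\Gamma)\geq 1$ plus Theorem~\ref{t:LG} to make the kernel finite, and torsion-freeness of limit groups to kill both the kernel and the orbifold points. The only cosmetic difference is that you unpack the isotropic-subspace mechanism behind Catanese's version of Siu--Beauville, which the paper simply cites.
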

\begin{proof}
A non-Abelian limit group $\Gamma$ admits a surjective homomorphism to a non-Abelian free group.
Therefore, if such a $\Gamma$ is the fundamental group of a compact K\"ahler manifold $X$, then by the 
Siu--Beauville theorem, cf.~\cite[Ch.~2]{ABCKT}, $X$ admits a surjective holomorphic map $f\colon X\longrightarrow C_g$
with connected fibers to a curve of genus $g\geq 2$. We now apply Lemma~\ref{l:C} to $f$. Since $\Gamma$ has positive 
first $\ell^2$-Betti number~\cite{Pi}, the kernel $K$ must be finite by Theorem~\ref{t:LG}. However, limit groups are 
torsion-free~\cite[Prop.~3.1]{CG}, and so the finite kernel $K$ is actually trivial. Thus $\Gamma = \pi_1^{orb}(C_g)$, 
and the torsion-freeness implies that there are no points with multiplicity $>1$ in $C_g$, equivalently $f$ has no multiple 
fibers. Thus $\Gamma$ is an ordinary surface group.

Conversely, surface groups are indeed limit groups, cf.~\cite[p.~27]{CG}.
\end{proof}


\section{The possible deficiencies of K\"ahler groups}

We have seen in Corollary~\ref{c:evendef} that there are no K\"ahler groups of positive even deficiency. This raises the 
question whether there are other constraints on the deficiencies of K\"ahler groups. In the case of positive deficiency
there are no further constraints, since all odd positive integers
are indeed deficiencies of K\"ahler groups because they are deficiencies of surface groups by Lemma~\ref{l:def}.

For negative deficiencies we have the following result.

\begin{prop}
Let $\Sigma_g$ be a closed oriented surface of genus $g\geq 1$, and $p>0$ a prime number. We have
\begin{align*}
\DEF (\pi_1(\Sigma_g)\times\Z^2) &=-2g \ ,\\
\DEF (\pi_1(\Sigma_g)\times \Z_p) &=-1 \ ,\\ 
\DEF (\pi_1(\Sigma_g)\times\Z^4) &=-3-6g \ , \\
\DEF (\pi_1(\Sigma_g)\times\Z^2\times (\Z_p)^2) &=-5-6g \ , \\
\DEF (\pi_1(\Sigma_g)\times (\Z_p)^4) &=-7-6g \ . 
\end{align*}
Furthermore, all these groups are K\"ahler.
\end{prop}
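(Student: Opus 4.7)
The plan is to handle separately the K\"ahlerness claim and the five deficiency formulas.

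For K\"ahlerness, a direct product of K\"ahler groups is K\"ahler, since a product of the realizing K\"ahler manifolds is K\"ahler. The surface group $\pi_1(\Sigma_g)$ is K\"ahler, $\Z^{2k}$ is realized by any abelian variety of dimension $k$, and by Serre's classical construction every finite group---in particular $\Z_p$, $(\Z_p)^2$, and $(\Z_p)^4$---is the fundamental group of a smooth complex projective variety. Taking Cartesian products yields K\"ahler manifolds realizing each of the five groups.

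For the deficiency formulas, I would get each upper bound from the Morse inequality over $\Z_p$ in the form $\DEF(\Gamma) \leq \dim_{\Z_p} H^1(\Gamma;\Z_p) - \dim_{\Z_p} H^2(\Gamma;\Z_p)$, exactly as already used in \lemref{l:coho}. The mod-$p$ cohomology of each factor is standard: $H^i(\Sigma_g;\Z_p)$ has dimension $1,2g,1$ for $i=0,1,2$; $H^i(\Z;\Z_p)$ has dimension $1,1$ for $i=0,1$; and $H^i(\Z_p;\Z_p)$ has dimension one in every degree, so $H^i((\Z_p)^m;\Z_p)$ has dimension $\binom{i+m-1}{m-1}$ by iterated K\"unneth. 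Assembling these by K\"unneth in each of the five cases, the difference $b_1-b_2$ comes out exactly to the claimed values $-2g$, $-1$, $-3-6g$, $-5-6g$, $-7-6g$.

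For the matching lower bounds, I would write down the natural direct-product presentation. For a product $G_1\times\cdots\times G_s$ with chosen presentations $\langle X_i\mid R_i\rangle$, take generators $\bigcup X_i$ and relations $\bigcup R_i$ together with all commutators $[x,y]$ for $x\in X_i$, $y\in X_j$, $i<j$. Using the standard presentations $\pi_1(\Sigma_g)=\langle x_1,y_1,\ldots,x_g,y_g\mid \prod_i [x_i,y_i]\rangle$, $\Z^k=\langle a_1,\ldots,a_k\mid [a_i,a_j]\text{ for }i<j\rangle$, and $(\Z_p)^k=\langle c_1,\ldots,c_k\mid c_i^p,\ [c_i,c_j]\rangle$, a direct count of generators minus relations in each of the five cases yields precisely the claimed deficiency.

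The main obstacle is bookkeeping and ensuring sharpness: the Morse upper bound and the explicit-presentation lower bound must agree term by term. This forces the use of minimal generating sets for every factor---size $2g$ for $\pi_1(\Sigma_g)$, size $k$ for $\Z^k$ and $(\Z_p)^k$---so that the number of inter-factor commutators introduced is as small as possible. Once this careful choice is made, the upper and lower bounds coincide, and the five identities follow at once.
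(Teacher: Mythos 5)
Your proposal is correct and follows essentially the same route as the paper: an explicit direct-product presentation gives the lower bound, the Morse inequality (with $\Z_p$ coefficients whenever finite factors appear) combined with the K\"unneth formula gives the matching upper bound, and K\"ahlerness follows from taking products of curves, abelian varieties, and Serre's realization of finite groups. The arithmetic in all five cases checks out.
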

\begin{proof}
It is clear that all the groups are K\"ahler, because they are direct products of fundamental groups of curves and of finite groups. 
All finite groups are K\"ahler by a general result of Serre, but for $\Z_p$ one does not need this general result.

The calculation of the deficiency proceeds in the same way for all the examples. On the one hand, there is an obvious presentation
in each case, which shows that the deficiency cannot be smaller than the value claimed. On the other hand,
the deficiency cannot be larger either, because of the Morse inequality~\eqref{eq:MM}. In the cases where finite factors
appear, the Morse inequality for cohomology with $\Z_p$ coefficients is used.
\end{proof}
The proposition shows that almost all negative integers are deficiencies of K\"ahler groups. The only values not covered 
are $-3$, $-5$ and $-7$. Of these, the first is taken care of by $\DEF ((\Z_p)^3)=-3$. Finally, there are many K\"ahler groups
of deficiency zero, so that $-5$ and $-7$ are the only integers for which we have not decided the question whether they are
deficiencies of K\"ahler groups. It is very likely that these two values can be realized, for example by finite groups. 

\section{Relaxing the K\"ahler condition}\label{s:relax}

It was proved by Taubes~\cite{T} that every finitely presentable group is the fundamental group of a compact
complex three-fold. Therefore, it is interesting to look at the fundamental groups of complex manifolds that 
are not necessarily K\"ahler, but have Hermitian metrics satisfying geometrically interesting conditions weaker 
than the K\"ahler condition, in order to see where the strong restrictions on K\"ahler groups break down. 

Kokarev~\cite{Kok} introduced the class of so-called pluri-K\"ahler--Weyl manifolds, and in~\cite{KK} it was proved 
that non-Abelian free groups cannot be fundamental groups of such manifolds. We now generalize this result
by looking at arbitrary groups with deficiency $\geq 2$.

It was shown by Ornea--Verbitsky~\cite{OV} that non-K\"ahler pluri-K\"ahler--Weyl manifolds of complex dimension 
$\geq 3$ also have non-K\"ahler Vaisman structures, meaning that there is a complex structure with a Hermitian metric 
whose fundamental two-form satisfies $d\omega  = \omega\wedge\theta$ for a parallel non-zero one-form $\theta$,
cf.~\cite{DO}.
Therefore~\cite{Kok,KK}, the class of pluri-K\"ahler--Weyl manifolds in the sense of~\cite{Kok} consists of the union of the 
following three classes: K\"ahler manifolds, arbitrary compact complex surfaces, and manifolds that are, up
to diffeomorphism, non-K\"ahler Vaisman of complex dimension $\geq 3$. We now show that for these 
manifolds there are no additional deficiency $\geq 2$ fundamental groups other than those
occurring in Theorem~\ref{t:master}.

\begin{lem}\label{l:VII}
For surfaces $S$  with $b_1(S)=1$ one has $\DEF (\pi_1(S))\leq 1$. This bound is sharp.
\end{lem}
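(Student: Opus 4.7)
The plan is to reduce the upper bound to a one-line application of the Morse inequality~\eqref{eq:MM}, and then to exhibit a familiar non-K\"ahler complex surface realizing equality.

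First I would identify $b_1(\pi_1(S))$ with $b_1(S)$. Since $H_1(S;\Z)$ is the abelianization of $\pi_1(S)$, the ranks agree, and the hypothesis $b_1(S)=1$ gives $b_1(\pi_1(S))=1$. The second Betti number of a group is a dimension and hence satisfies $b_2(\pi_1(S))\geq 0$. Plugging into~\eqref{eq:MM} yields
\begin{equation*}
\DEF(\pi_1(S))\leq b_1(\pi_1(S))-b_2(\pi_1(S))\leq 1,
\end{equation*}
which is the claimed inequality.

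For sharpness I would use a primary Hopf surface $S$, which is diffeomorphic to $S^1\times S^3$ and admits a complex structure with $\pi_1(S)=\Z$ and $b_1(S)=1$. Since $\Z$ has the one-generator no-relation presentation, $\DEF(\Z)=1$, realizing equality in the bound.

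The only step that needs a moment's care is the identification $b_1(\pi_1(S))=b_1(S)$; otherwise the argument is immediate from the Morse inequality, and no genuine obstacle arises. In particular this lemma does not require any complex-analytic input beyond the existence of a Hopf surface for the sharpness half, so it sits entirely at the level of elementary group cohomology.
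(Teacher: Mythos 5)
Your argument is correct and is essentially identical to the paper's proof: the upper bound follows immediately from the Morse inequality~\eqref{eq:MM} together with $b_1(\pi_1(S))=b_1(S)=1$ and $b_2(\pi_1(S))\geq 0$, and sharpness is witnessed by a Hopf surface with fundamental group $\Z$. No gaps.
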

\begin{proof}
The implication is clear, cf.~\eqref{eq:MM}.
The bound is sharp because $\Z$ occurs as the fundamental group of certain Hopf surfaces of class VII.
\end{proof}

\begin{prop}\label{l:elliptic}
Let $S$ be a non-K\"ahler elliptic surface with $b_1(S)\geq 3$. Then $\ltb (\pi_1(S))=0$ and $\DEF (\pi_1(S))\leq 0$.
\end{prop}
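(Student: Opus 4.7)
The plan is to imitate the Albanese-based argument of Theorem~\ref{t:Master}, with the elliptic fibration $f\colon S\longrightarrow C$ now playing the role of the Albanese map. First, I would verify that the base $C$ has genus $g\geq 1$: if $g=0$ then $\pi_1^{orb}(C)$ has vanishing first Betti number, and the tail $\Z^2\to\pi_1(S)\to\pi_1^{orb}(C)\to 1$ of the fibration's homotopy exact sequence would force $b_1(S)\leq 2$, contradicting $b_1(S)\geq 3$. With $g\geq 1$, Lemma~\ref{l:C} applied to $f$ provides the short exact sequence
\begin{equation*}
1\longrightarrow K\longrightarrow \pi_1(S)\longrightarrow \pi_1^{orb}(C_g)\longrightarrow 1,
\end{equation*}
in which $K$ is a finitely generated abelian quotient of the fundamental group $\Z^2$ of a smooth fiber.

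To rule out finite $K$, I use that non-K\"ahler compact complex surfaces have odd first Betti number. If $K$ were finite, the Lyndon--Hochschild--Serre spectral sequence with rational coefficients would give $b_1(\pi_1(S))=b_1(\pi_1^{orb}(C_g))=2g$, which is even---a contradiction. Hence $K$ is infinite; together with $\pi_1^{orb}(C_g)$ infinite and $K$ finitely generated, Theorem~\ref{t:LG} delivers $\ltb(\pi_1(S))=0$.

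For the deficiency bound, Proposition~\ref{p:hill} gives $\DEF(\pi_1(S))\leq \ltb(\pi_1(S))+1=1$, and I need to exclude equality. Were $\DEF(\pi_1(S))=1$, the same proposition would force the presentation complex of a minimal presentation to be aspherical, hence $\mathrm{cd}\,\pi_1(S)\leq 2$. If $K$ has torsion then $\pi_1(S)$ has torsion and $\mathrm{cd}\,\pi_1(S)=\infty$. Otherwise $K\cong\Z^r$ with $r\in\{1,2\}$; using~\cite{BN} one passes to a finite-index subgroup $\Gamma'\subset\pi_1(S)$ fitting in an extension $1\to K\to \Gamma'\to Q'\to 1$ with $Q'$ an ordinary closed surface group of genus $\geq 1$. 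Then $\Gamma'$ is the fundamental group of an aspherical closed $(r+2)$-manifold---a torus bundle over a surface---so $\mathrm{cd}\,\Gamma'=r+2\geq 3$, and by Serre's invariance of cohomological dimension under finite-index subgroups of torsion-free groups this contradicts $\mathrm{cd}\,\pi_1(S)\leq 2$.

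The main obstacle is the final step, specifically realising every extension of a closed surface group of positive genus by $\Z$ or $\Z^2$ as the fundamental group of an aspherical closed three- or four-manifold (a circle- or torus-bundle over the surface). Granted this, $\DEF(\pi_1(S))\leq 0$ follows.
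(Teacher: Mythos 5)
Your proof is correct and follows the same overall strategy as the paper's: extract the exact sequence $1\to K\to\pi_1(S)\to\pi_1^{orb}(C_g)\to 1$ from the elliptic fibration, apply the L\"uck--Gaboriau vanishing theorem to get $\ltb(\pi_1(S))=0$, deduce $\DEF(\pi_1(S))\leq 1$ from the $\ell^2$-Morse inequality, and kill the equality case via Hillman's asphericity criterion and a cohomological-dimension contradiction. Where you differ is in how two sub-steps are justified. The paper imports from \cite[Section~2.7.2]{FM} that the kernel is exactly $\Z^2$ (so it is automatically infinite and L\"uck--Gaboriau applies at once), and cites \cite[Cor.~1.41]{ABCKT} for $\mathrm{cd}\,\pi_1(S)=4$ to contradict $\mathrm{cd}\leq 2$. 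You instead work with $K$ known only to be a finitely generated quotient of $\Z^2$, rule out finite $K$ by the parity of $b_1$ (odd for non-K\"ahler surfaces), and rebuild the cohomological-dimension lower bound by hand, realizing the relevant finite-index subgroup as the fundamental group of an aspherical circle- or torus-bundle over a surface. The step you flag as the main obstacle is indeed standard: extensions of $\pi_1(\Sigma_h)$ by $\Z^r$ ($r=1,2$) with prescribed outer action and extension class correspond exactly to $T^r$-bundles over $\Sigma_h$, since $\mathrm{Diff}(T^r)\simeq T^r\rtimes GL(r,\Z)$ for $r\leq 2$ and the classifying-space data (monodromy plus a class in twisted $H^2$) match the group-extension data; alternatively one can avoid the geometric realization entirely by invoking Bieri's result that an extension of duality groups is a duality group of dimension $\mathrm{cd}(K)+\mathrm{cd}(Q)=r+2\geq 3$. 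In short, your argument is a correct, self-contained substitute for the two external citations, at the cost of some extra case analysis.
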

\begin{proof}
The elliptic fibration induces an exact sequence of the form
$$
1\longrightarrow \Z^{2}\longrightarrow \pi_1(S)\longrightarrow \pi_1^{orb}(C_g)\longrightarrow 1 \ ,
$$
with $g\geq 1$; cf.~\cite[Section~2.7.2]{FM}. Therefore, by Theorem~\ref{t:LG}, one has $\ltb (\pi_1(S))=0$, 
implying $\DEF (\pi_1(S))\leq 1$ via~\eqref{eq:Morse}. In the case of equality in~\eqref{eq:Morse}, the group $\pi_1(S)$ 
would have to be of cohomological dimension at most $2$ by Proposition~\ref{p:hill}, which is 
not possible. In fact, the cohomological dimension is $4$, see~\cite[Cor.~1.41]{ABCKT}.
\end{proof}

\begin{prop}\label{p:Vaisman}
For a non-K\"ahler Vaisman manifold $V$ one has $\ltb (\pi_1(V))=0$ and $\DEF (\pi_1(V))\leq 1$. This bound is sharp.
\end{prop}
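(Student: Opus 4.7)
The plan is to reduce Proposition~\ref{p:Vaisman} to a situation parallel to Proposition~\ref{l:elliptic}: extract from the Vaisman structure a fibration of $V$ over the circle, so that $\pi_1(V)$ sits in a short exact sequence
$$
1\longrightarrow \pi_1(S)\longrightarrow \pi_1(V)\longrightarrow \Z\longrightarrow 1
$$
with $\pi_1(S)$ finitely generated, and then feed this into the $\ell^2$-machinery of Section~\ref{s:ell}.

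To produce the fibration, I would observe first that the Lee one-form $\theta$ is parallel and non-zero by definition, hence closed and nowhere vanishing. By the Ornea--Verbitsky structure theorem for compact Vaisman manifolds (cf.~\cite{DO}), $V$ is, up to diffeomorphism, a mapping torus of a Sasakian automorphism of some compact Sasakian manifold $S$; in particular there is a smooth fiber bundle $V\to S^1$ with connected compact fiber $S$. If one prefers to avoid that structure theorem, a direct application of Tischler's theorem to the closed non-vanishing form $\theta$ achieves the same conclusion. The long exact sequence of homotopy groups of this fibration produces the displayed extension, and $\pi_1(S)$ is finitely presentable because $S$ is compact.

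Next I would split into two cases according to whether the kernel is finite or not. If $\pi_1(S)$ is infinite, Theorem~\ref{t:LG} applies to the extension above and gives $\ltb(\pi_1(V))=0$ directly. If $\pi_1(S)$ is finite, then Lemma~\ref{l:ss} applies with quotient $\Z$, yielding
$$
\ltb(\pi_1(V))\;=\;\frac{1}{|\pi_1(S)|}\,\ltb(\Z)\;=\;0.
$$
In either case $\ltb(\pi_1(V))=0$, and Proposition~\ref{p:hill} then gives $\DEF(\pi_1(V))\leq\ltb(\pi_1(V))+1=1$.

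Sharpness can be witnessed by a classical Hopf manifold $H=(\C^n\setminus\{0\})/\langle z\mapsto 2z\rangle$ for any $n\geq 2$, which carries a natural non-K\"ahler Vaisman structure (see~\cite{DO}) and has $\pi_1(H)\cong\Z$, a group of deficiency one. The main (mild) obstacle is the step of invoking the structure theorem to turn the geometric Vaisman data into a genuine fibration $V\to S^1$ with connected fiber; once that is in hand the $\ell^2$-computation is a short case split, and unlike in Proposition~\ref{l:elliptic} one does not need to exclude the equality case of Proposition~\ref{p:hill}, because here the bound $\DEF\leq 1$ rather than $\DEF\leq 0$ is what is wanted.
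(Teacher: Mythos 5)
Your proposal is correct and follows essentially the same route as the paper: the parallel one-form yields a fiber bundle over $S^1$, the resulting extension with kernel $\pi_1(F)$ is handled by Lemma~\ref{l:ss} when the kernel is finite and by Theorem~\ref{t:LG} when it is infinite, and the deficiency bound then comes from~\eqref{eq:Morse}, with sharpness witnessed by Hopf-type examples. The extra care you take over producing the fibration (Ornea--Verbitsky or Tischler) and over connectedness of the fiber is harmless but not needed beyond what the paper already asserts.
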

\begin{proof}
By definition, a non-K\"ahler Vaisman manifold $V$ carries a parallel non-zero one-form $\theta$, and therefore is the total space
of a smooth fiber bundle over the circle. Thus, its fundamental group fits into an extension of the form
$$
1\longrightarrow \pi_1(F)\longrightarrow \pi_1(V)\longrightarrow \Z\longrightarrow 1 \ ,
$$
with $F$ the fiber of the fibration over $S^1$. If $\pi_1(F)$ is finite, then $\ltb (\pi_1(V))=0$ follows
from $\ltb (\Z)=0$ by Lemma~\ref{l:ss}. If $\pi_1(F)$ is infinite, then $\ltb (\pi_1(V))=0$ follows
from Theorem~\ref{t:LG}. Now the vanishing of $\ltb (\pi_1(V))$ implies $\DEF (\pi_1(V))\leq 1$ by~\eqref{eq:Morse}. 
This bound is sharp because $\Z$ occurs as the fundamental group of certain Vaisman manifolds of Hopf type.
\end{proof}

Putting together these observations, we see that Theorems~\ref{t:master} and~\ref{t:limit} extend in the 
following way:
\begin{cor}
If $\Gamma$ is either a group with $\DEF (\Gamma)\geq 2$ or a non-Abelian limit group and $\Gamma$ is 
the fundamental group of a compact pluri-K\"ahler--Weyl manifold $X$, then $X$ is K\"ahler.
\end{cor}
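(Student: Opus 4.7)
The plan is to reduce to what has already been established in Theorems~\ref{t:master} and~\ref{t:limit} by showing that neither hypothesis on $\Gamma$ is compatible with $X$ lying in a non-K\"ahler subclass of pluri-K\"ahler--Weyl manifolds. As recalled in this section, that class splits into three pieces: K\"ahler manifolds, compact complex surfaces, and (up to diffeomorphism) non-K\"ahler Vaisman manifolds of complex dimension $\geq 3$. So the task is exactly to rule out the last two pieces in the non-K\"ahler case; once this is done, one concludes $X$ is K\"ahler and invokes the corresponding theorem to describe $\Gamma$.

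The unifying input is that both hypotheses on $\Gamma$ force $\ltb(\Gamma)\geq 1$: under $\DEF(\Gamma)\geq 2$ this is Proposition~\ref{p:hill}, while for non-Abelian limit groups it is the theorem of Pichot~\cite{Pi} used in Section~\ref{s:limit}. In the limit-group case one additionally has $b_1(\Gamma)\geq 2$, because $\Gamma$ surjects onto a non-Abelian free group of rank $\geq 2$. These two observations are the only properties of $\Gamma$ that the argument will actually use.

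With this in hand the Vaisman case is immediate: Proposition~\ref{p:Vaisman} gives $\ltb(\pi_1(V))=0$ whenever $V$ is non-K\"ahler Vaisman, directly contradicting $\ltb(\Gamma)\geq 1$. For a non-K\"ahler compact complex surface $S$, Kodaira's criterion makes $b_1(S)$ odd, so I would split into two subcases. If $b_1(S)=1$, then Lemma~\ref{l:VII} gives $\DEF(\pi_1(S))\leq 1$, excluding the deficiency hypothesis; and $\pi_1(S)$ cannot surject onto a non-Abelian free group for lack of first Betti number, excluding the limit-group hypothesis. If $b_1(S)\geq 3$, then by the Enriques--Kodaira classification $S$ carries an elliptic fibration---the only non-K\"ahler possibilities beyond class VII are Kodaira surfaces and properly elliptic surfaces, while general-type surfaces have even $b_1$---so Proposition~\ref{l:elliptic} applies and yields $\ltb(\pi_1(S))=0$, again contradicting $\ltb(\Gamma)\geq 1$.

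The substantive analytic and group-theoretic work sits entirely inside Propositions~\ref{p:Vaisman} and~\ref{l:elliptic} and Lemma~\ref{l:VII}, so the remaining step is a short case-check. The only mild point of care is identifying non-K\"ahler surfaces with $b_1\geq 3$ as elliptic, but this is a standard consequence of the classification rather than a real obstacle; once it is in place, the deficiency and limit-group hypotheses collapse into the single input $\ltb(\Gamma)\geq 1$, and each non-K\"ahler class is excluded by a vanishing result proved earlier.
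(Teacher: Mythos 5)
Your proposal is correct and follows essentially the same route as the paper: reduce to the three subclasses, use $\ltb(\Gamma)\geq 1$ (from Proposition~\ref{p:hill} resp.\ Pichot's theorem) against the $\ell^2$-vanishing results of Propositions~\ref{l:elliptic} and~\ref{p:Vaisman} for elliptic surfaces with $b_1\geq 3$ and for non-K\"ahler Vaisman manifolds, and dispose of the $b_1=1$ surfaces via Lemma~\ref{l:VII} and the impossibility of surjecting onto a non-Abelian free group. No gaps.
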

\begin{proof}
A compact complex surface is K\"ahler if and only if its first Betti number is even~\cite{BPV}. Thus we 
have to consider surfaces with odd $b_1$. By the Kodaira classification surfaces $S$ with odd first Betti number 
$\geq 3$ are elliptic, see~\cite{ABCKT,BPV}. 

The vanishing of their first $\ell^2$-Betti number shows that fundamental groups of elliptic surfaces with $b_1\geq 3$
or of non-K\"ahler Vaisman manifolds cannot have deficiency $\geq 2$, or be non-Abelian limit groups.

Groups with $b_1=1$ have deficiency at most one, and they cannot surject to non-Abelian free groups. 
\end{proof}

Finally, we note that Proposition~\ref{p:Vaisman} has the following consequence:
\begin{cor}
The fundamental group of a Vaisman manifold has finitely many ends.
\end{cor}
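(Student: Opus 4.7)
The plan is to exploit the smooth fibration $V\to S^1$ coming from the parallel one-form $\theta$, exactly as in the proof of Proposition~\ref{p:Vaisman}. That proof already produces the short exact sequence
$$1\longrightarrow\pi_1(F)\longrightarrow\pi_1(V)\longrightarrow\Z\longrightarrow 1$$
with $\pi_1(F)$ finitely generated, since $F$ is a compact manifold. The number of ends of $\pi_1(V)$ will then be controlled by splitting the analysis according to whether $\pi_1(F)$ is finite or infinite.

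If $\pi_1(F)$ is finite, then $\pi_1(V)$ is virtually infinite cyclic and therefore has exactly two ends. In the remaining case, $\pi_1(V)$ possesses an infinite, finitely generated, normal subgroup of infinite index, and I would invoke the classical theorem (essentially due to Houghton) that any finitely generated group admitting such a subgroup has exactly one end. That theorem can be extracted from Stallings' ends theorem: were $\pi_1(V)$ to have more than one end, it would admit a non-trivial splitting over a finite subgroup $C$, and the induced action of the normal subgroup $\pi_1(F)$ on the corresponding Bass--Serre tree would either fix a vertex (forcing $\pi_1(F)$, by normality, into the finite kernel of the action, contradicting its infinitude) or force $\pi_1(V)$ itself to stabilise a line, which would contradict $[\pi_1(V):\pi_1(F)]=\infty$.

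A second route, more in the spirit of the paper, starts directly from the vanishing $\ltb(\pi_1(V))=0$ supplied by Proposition~\ref{p:Vaisman}. One then invokes the fact that any finitely generated infinite group with infinitely many ends has strictly positive first $\ell^2$-Betti number, which follows by applying the Cheeger--Gromov Mayer--Vietoris formula for $\ell^2$-Betti numbers of graphs of groups to the Stallings decomposition over a finite subgroup; this again yields the conclusion that $\pi_1(V)$ has at most two ends. In either case $\pi_1(V)$ has one or two ends, hence finitely many. The main obstacle I anticipate is the supporting ends-theoretic input—the Bass--Serre step in the first route, the Cheeger--Gromov computation in the second—both of which are standard but are not developed within the paper itself.
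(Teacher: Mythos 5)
Your second route is precisely the paper's argument: Proposition~\ref{p:Vaisman} gives $\ltb(\pi_1(V))=0$, and the paper then simply cites the fact (from \cite{ABCKT,ABR}) that a finitely generated group with infinitely many ends has positive first $\ell^2$-Betti number; your sketch of that fact via Stallings plus the Mayer--Vietoris formula for $\ell^2$-Betti numbers is a reasonable account of what lies behind the citation. Your first route is genuinely different and more elementary: it bypasses $\ell^2$-cohomology entirely, using only the fibration over $S^1$ together with the classical fact that a finitely generated group with an infinite finitely generated normal subgroup of infinite index has one end, and it yields the sharper conclusion that $\pi_1(V)$ has at most two ends. Both routes are sound for non-K\"ahler Vaisman manifolds.

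The one point you have not addressed is the degenerate case in which the Vaisman manifold is K\"ahler, i.e.\ the Lee form $\theta$ vanishes. In that case there is no fibration over $S^1$ (so your first route does not start), and Proposition~\ref{p:Vaisman} does not apply (so your second route cannot conclude $\ltb(\pi_1(V))=0$; indeed K\"ahler groups such as surface groups have positive $\ltb$). The paper's proof is phrased exactly so as to cover this: if $\pi_1(V)$ had infinitely many ends then $\ltb(\pi_1(V))>0$, forcing $V$ to be K\"ahler by Proposition~\ref{p:Vaisman}, and then Gromov's theorem that K\"ahler groups have at most one end gives the contradiction. If your convention is that ``Vaisman'' presupposes $\theta\neq 0$ this is a non-issue, but under the paper's usage you need to add the sentence invoking Gromov's one-endedness result for K\"ahler groups to close this case.
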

\begin{proof}
As is now well-known, groups with infinitely many ends have positive first $\ell^2$-Betti number, cf.~\cite{ABCKT,ABR}.
Thus a Vaisman manifold with a fundamental group with infinitely many ends would have to be K\"ahler by
Proposition~\ref{p:Vaisman}. However, in the K\"ahler case the number of ends is at most one by Gromov's 
result~\cite{G}; cf.~also~\cite{ABCKT,ABR}.
\end{proof}

\bibliographystyle{amsplain}

\bigskip

\end{document}